\newcommand{\txn}[1]{\textnormal{#1}}
\newcommand{\rsphere}{\mathbb{C}\mathbb{P}^1}
\newcommand{\algbr}{\overline{\mathbb{Q}}}
\newcommand{\absgal}{\txn{Gal}(\algbr/\mathbb{Q})}
\newcommand{\inv}[1]{#1^{-1}}
\newcommand{\per}{(\sigma,\alpha,\varphi)}
\newcommand{\pergrp}{\left\langle\sigma,\alpha,\varphi\right\rangle}
\newcommand{\pergrpj}{\left\langle\sigma c_j, \alpha, c_j\varphi\right\rangle}
\newcommand*{\symmdiff}{\bigtriangleup}
\newcommand{\belyi}{Bely\u{\i}\xspace}
\newcommand{\q}{\mathbb Q}
\newcommand{\dtroid}[1]{\Delta (#1)}
\newcommand{\form}[2]{\left\langle #1,#2\right\rangle}
\newcommand{\mform}[2]{\left(#1|#2\right)}
\newcommand{\sprod}[2]{\left[#1,#2\right]}
\newcommand{\orth}{\q^E\oplus\q^{E^*}}
\newcommand{\map}{\mathcal M}
\newcommand{\zoi}{\{0,1,\infty\}}
\newcommand{\h}{H_1(S\setminus\{V\cup V^*\})}
\newtheorem{theorem}{Theorem}[section]
\newtheorem*{theorem*}{Theorem}
\theoremstyle{definition}
\newtheorem{lemma}{Lemma}[section]
\newtheorem{example}{Example}[section]
\newtheorem{constr}{Construction}[section]
\newtheorem{definition}{Definition}[section]
\newtheorem{rmrk}{Remark}[section]
\title[Action of $BC_n$ on maps, matroids and representations]{An action of the Coxeter group $BC_n$ on maps on surfaces, Lagrangian matroids and their representations}
\author{Goran Mali\'{c}}
\address{School of Mathematics, University of Manchester, Oxford Road, Manchester M13 9PL, United Kingdom}
\email{goranm00@gmail.com}
\begin{document}
\bibliographystyle{plainnat}

\begin{abstract}
For a map $\mathcal M$ cellularly embedded on a connected and closed orientable surface, the bases of its Lagrangian (also known as delta-) matroid $\dtroid\map$ correspond to the bases of a Lagrangian subspace $L$ of the standard orthogonal space $\orth$, where $E$ and $E^*$ are the edge-sets of $\mathcal M$ and its dual map. The Lagrangian subspace $L$ is said to be a representation of both $\mathcal M$ and $\dtroid\map$. Furthermore, the bases of $\dtroid\map$, when understood as vertices of the hypercube $[-1,1]^n$, induce a polytope $\mathbf P(\dtroid\map)$ with edges parallel to the root system of type $BC_n$. In this paper we study the action of the Coxeter group $BC_n$ on $\mathcal M$, $L$, $\dtroid\map$ and $\mathbf P(\dtroid\map)$. We also comment on the action of $BC_n$ on $\map$ when $\map$ is understood a dessin d'enfant.

\smallskip
\noindent \textbf{Keywords.} Matroids, matroid polytopes, delta-matroids, partial duals, hyperoctahedral group, dessins d'enfants.

\smallskip
\noindent \textbf{MSC2010.} Primary: 05B35. Secondary: 20F55, 11G32, 14H57, 17B22, 57M60.

\end{abstract}

\maketitle

\section{Introduction}

It is well known that planar connected graphs can be characterised by their graphic matroids: a connected graph $G$ is planar if, and only if its graphic matroid $M(G)$ is co-graphic \cite{oxley92}, i.e.\ if there is a connected graph $H$ such that $M(H)$ is isomorphic to the dual matroid $M(G)^*$ of $M(G)$. Furthermore, if $G$ is planar, then it has a geometric dual $G^*$, and the two matroids $M(G)^*$ and $M(G^*)$, are isomorphic. Non-planar graphs cellularly embedded on orientable, connected and closed surfaces, otherwise known as \emph{maps on surfaces}, satisfy a similar relation if we consider \emph{delta-matroids} instead. In that case, if $i(G)$ is a cellular embedding of $G$ into a orientable, connected and closed surface, then its delta-matroid $\dtroid {i(G)}$ is isomorphic to the delta-matroid $\dtroid {i(G)^*}$ of the geometric dual of $i(G)$, and planar graphs can be characterised as those graphs $G$ for which $\dtroid {i(G)}$ is a matroid \cite{moffat14}.

Delta-matroids were introduced by Bouchet in \cite{bouchet87} and are related to maps on surfaces in the same way as matroids are related to graphs: the collection of bases of a delta-matroid is in a 1-1 correspondence with the collection of bases of a map (see section \ref{section:maps} for the definition of a base of a map). 

Delta-matroids are also known as Lagrangian matroids \cite{borovik_gelfand_white} for the fact that they capture the underlying combinatorics of Lagrangian subspaces sitting inside symplectic $2n$-spaces, similarly to how matroids capture the combinatorial essence of linear (in)de\-pen\-dence in vector spaces. Throughout the paper we shall use the name Lagrangian matroid in order to emphasise the connection with Lagrangian subspaces.

Lagrangian matroids arising from maps on surfaces have the additional property of being representable: if $\mathbb F$ is a field of characteristic 0 and $\map$ a map with $n$ edges, then to the Lagrangian matroid $\dtroid\map$ we can assign an $n$-dimensional Lagrangian subspace $L$ of the standard orthogonal $2n$-space $\mathbb F^n\oplus \mathbb F^n$, such that the bases of $L$ correspond to the bases of $\dtroid\map$. In that case, $\dtroid\map$ is represented by a $k\times 2n$ matrix $\mform XY$, where $X$ and $Y$ are $k\times n$ matrices over $\mathbb F$ with $XY^t$ symmetric. The aim of this paper is to study such representations.

The representation of $\dtroid\map$ is best understood if we study it together with the representation of the delta-matroid $\dtroid {\partial_j \map}$ corresponding to the map $\partial_j \map$ called \emph{the partial dual of} $\map$ (with respect to the edge $j$). In his 2009 paper \cite{chmutov09}, Chmutov introduced the operation of \emph{generalised duality}, now known as \emph{partial duality}, which generalises the geometric dual of a map; it is an edge-by-edge operation which for a map $\map$, and some edge $j$ of $\map$, produces a map $\partial_j \map$ with the same number of edges as $\map$, but not necessarily the same number of vertices or faces. After dualising every edge of $\map$, the geometric dual $\map^*$ of $\map$ is restored.

It was noted in \cite{moffat14} that the delta-matroids of $\map$ and its partial duals $\partial_j \map$ have compatible structure, i.e.\ there is a 1-1 correspondence between the bases of $\map$ and the bases of $\partial_j \map$, and furthermore the symmetric difference operator $A\symmdiff B=(A\cup B)\setminus(A\cap B)$ establishes a 1--1 correspondence between the bases of their respective Lagrangian matroids: for each base $B'$ in $\dtroid{\partial_j \map}$, there is a unique base $B$ in $\dtroid \map$ such that $B'=B\symmdiff\{j,j^*\}$, where $j^*$ denotes the dual edge of $j$. In section \ref{section: representation of partial duals} we show that the representations behave accordingly:
\begin{quote}
the representation $L_j$ of $\dtroid{\partial_j \map}$ can be obtained from the representation $L$ of $\dtroid \map$ by interchanging the column-vectors $j$ and $j^*$.
\end{quote}

This operation can also be thought of as an action of a transposition $(j~j^*)$ in the Coxeter group $BC_n$ on the columns of $L$. In fact, this action can be lifted to an action on maps with $n$ edges by defining $(j~j^*)\map$ to be the partial dual $\partial_j\map$. Since $BC_n$ is generated by such transpositions, together with the elements $(j~k)(j^*~k^*)$, which act on $\map$ by relabelling its edge-set, we have an action of $BC_n$ on maps. Hence, the aim of this paper is to study the action of $BC_n$ on: Lagrangian matroids, maps on surfaces and their representations as Lagrangian subspaces of the standard symplectic space.

\subsection*{Structure of the paper and main results}

This paper is organised as follows: in section \ref{section: lagrangian matroids} we start with the definition of Lagrangian matroids, and briefly discuss how they arise from totally isotropic subspaces of a standard symplectic or orthogonal space. We continue the section by discussing how to obtain Lagrangian matroids, and their representations from maps on surfaces.

In section \ref{section: partial duals} we introduce the cartographic group of a map, and define the partial duals of a map via its cartographic group. We recall the result of \cite{moffat14} and provide an alternative proof that the operation of partially dualising a map $\map$ with respect to a subset $A$ of its edge-set is compatible with taking the symmetric difference of its Lagrangian matroid $\dtroid\map$ in the sense that
\[\dtroid{\partial_A\map}=\dtroid\map\symmdiff (A\cup A^*),\]
where $A^*$ denotes the corresponding set of dual edges of the dual map $\map^*$. We also prove that the bases of a map $\map$ and the bases of its Lagrangian matroid $\dtroid\map$ are in a 1--1 correspondence with the partial duals of $\map$ which have exactly one face.

In section \ref{section: representation of partial duals} we discuss the representations of Lagrangian matroids of partial duals and the action of the Coxeter group $BC_n$ on maps on surfaces and Lagrangian matroids. We show that the representations $L_A$ of Lagrangian matroids of partial duals $\partial_A\map$ of a map $\map$ are obtained by transposing columns indexed by $A$ in the representation $L$ of the Lagrangian matroid of $\map$, which we can also understand as an action of $BC_n$ by transpositions $(j~j^*)$ on the representation $L$. We also show how acting by an element of $BC_n$ on a map affects its cartographic group.

Next, in section \ref{section:polytopes} we introduce the Lagrangian matroid polytope and discuss the action of $BC_n$ in terms of the root system of type $C_n$. In section \ref{section:pairs} we briefly touch upon the topic of pairs of Lagrangian subspaces and show that such pairs naturally arise from maps on surfaces and their partial duals with respect to a single edge.

In section \ref{section: application} we interpret maps as dessins d'enfants, i.e.\ algebraic curves $X$ over $\algbr$ together with a holomorphic ramified covering $X\to\rsphere$ ramified at most over a subset of $\zoi$. We show that when a map $\map$ is understood as a dessin d'enfant, then the partial duals of $\map$ with respect to some base of $\map$ can be defined over their field of moduli. Through an example we study how the natural Galois action on dessins d'enfants affects the fields of definition and the cartography groups of partial duals.

Finally, in section \ref{section:concluding} we provide some concluding remarks and provide some direction for future research.

\section{Lagrangian matroids associated to maps on surfaces, and representations}\label{section: lagrangian matroids}

\subsection{Lagrangian matroids}
Let $[n]=\{1,2,\dots,n\}$ and $[n]^*=\{1^*,2^*,\dots,n^*\}$ and introduce the maps
$*\colon [n]\to[n]^*$ with $j\mapsto j^*$, and $*\colon [n]^*\to[n]$ with $j^*\mapsto j$, so that $j^{**}=j$ and $*$ represents an involutive operation on $[n]\cup[n]^*$.

We say that an $n$-subset $A\subset[n]\cup[n]^*$ is admissible if $A\cap A^*=\emptyset$, or equivalently, if for all $j\in[n]$ precisely one of $j$ or $j^*$ appear in it.

Denote by $J_n$ the set of all admissible $n$-subsets of $[n]\cup[n]^*$, and let $\symmdiff$ be the symmetric difference operator, i.e.\ for sets $A$ and $B$ we have
\[A\symmdiff B=(A\cup B)\setminus(A\cap B).\]
\begin{definition}\label{def:symm}Let $\mathcal B$ be a collection of subsets in $J_n$. We say that the triple $([n]\cup [n]^*,*,\mathcal B)$ is a \emph{Lagrangian matroid} if it satisfies the \emph{symmetric exchange axiom}:
\begin{quote}for any $A,B\in\mathcal B$, and $j\in A\symmdiff B$, there exists $k\in B\symmdiff A$ such that $A\symmdiff\{j,k,j^*,k^*\}\in\mathcal B$.
\end{quote}
The elements in the collection $\mathcal B$ are the \emph{bases} of $([n]\cup [n]^*,*,\mathcal B)$.
\end{definition}
The symmetric exchange axiom obviously mimics the basis exchange axiom for matroids, and it is an easy exercise for the reader to show that every matroid is a Lagrangian matroid.

\begin{rmrk}The reason that we first write $A\symmdiff B$ and then $B\symmdiff A$ in definition \ref{def:symm}, even though the two sets are one and the same, is to emphasise the similarity with the basis exchange axiom for matroids, where the set difference operator plays the role of the symmetric difference operator.
\end{rmrk}

Two Lagrangian matroids $([m]\cup [m]^*,*,\mathcal B_1)$ and $([n]\cup [n]^*,*,\mathcal B_2)$ are \emph{isomorphic} if $m=n$ and there are bijections $(f,f^*)$ of $[n]$ and $[n]^*$, respectively, such that $f$ identifies the elements of $\mathcal B_1\cap[n]$ with those of $\mathcal B_2\cap[n]$, and $f^*$ identifies the elements of $\mathcal B_1\cap[n]^*$ with those of $\mathcal B_2\cap[n]^*$.

\begin{rmrk}It is enough to specify a bijection $f\colon[n]\to[n]$ identifying $\mathcal B_1\cap[n]$ with $\mathcal B_2\cap[n]$: if $f(j)=i_j$ is such a bijection, then define $f^*$ as $f^*(j^*)=i_j^*$. If $B_1$ is a base in $\mathcal B_1$ with $k$ unstarred elements, for simplicity
\[B_1=\{1,\dots,k,(k+1)^*,\dots,n^*\},\]
then there exists $B_2\in\mathcal B_2$ such that $f(B_1\cap[n])=B_2\cap [n]$ with
\[B_2=\{i_1,\dots,i_k,i_{k+1}^*,\dots,i_n^*\}.\]
It follows that $f^*(B_1\cap[n]^*)=\{i_{k+1}^*,\dots,i_n^*\}=B_2\cap[n]^*$.
\end{rmrk}

The most important examples of Lagrangian matroids are those that arise from maximal isotropic subspaces of a standard symplectic space. Such Lagrangian matroids are called \emph{representable}.

\subsubsection{Isotropic subspaces}

Let $V$ be a standard symplectic space, i.e.\ a $2n$-di\-men\-si\-o\-nal vector space over a characteristic 0 field $\mathbb F$ with a basis
\[E=\{e_1,e_2,\dots,e_n,e_{1^*},e_{2^*},\dots,e_{n^*}\},\]
and equipped with an anti-symmetric bilinear form $\form\cdot\cdot$ such that $\form{e_i}{e_j}=0$ whenever $i\neq j^*$, and $\form{e_i}{e_{i^*}}=1=-\form{e_{i^*}}{e_i}$ for all $i\in[n]$.

\begin{definition}We say that a subspace $W$ of $V$ is an \emph{isotropic} subspace of $V$ if the form $\form\cdot\cdot$ vanishes on $W$. The isotropic subspaces of maximum dimension are said to be \emph{Lagrangian}.\end{definition}

If $U$ is a $k$-dimensional isotropic subspace, then we must have
\[2n=\dim U+\dim{U^\perp}\geq k+k\]
since $U^\perp$ contains $U$ as a subspace. Therefore, the dimension of $U$ can be at most $n$, and $U$ is Lagrangian if, and only if $\dim U=n$.

\subsubsection{Representations of isotropic subspaces}

Let $U$ be a $k$-dimensional isotropic subspace of $V$ and $\{u_1,\dots,u_k\}$ a basis for $U$. We can expand the vectors $u_j$ in terms of the basis $E$ so that
\[u_i=\sum_{j=1}^n x_{ij}e_j+\sum_{j=1}^n y_{ij}e_{j^*},\]
and write down the coefficients $x_{ij}$ and $y_{ij}$ into a $k\times 2n$ matrix $\mform XY$ such that $X=(x_{ij})$ and $Y=(y_{ij})$. The matrix $\mform XY$ is said to be \emph{a representation} of $U$, and such matrices completely characterise isotropic subspaces of $V$.

\begin{lemma}A subspace $U$ of the standard symplectic space $V$ is isotropic if, and only if $U$ is represented by a matrix $\mform XY$ with $XY^t$ symmetric.\end{lemma}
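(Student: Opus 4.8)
The plan is to translate the isotropy condition $\form{u_i}{u_{i'}}=0$ for all basis vectors into a matrix identity. First I would expand $u_i=\sum_j x_{ij}e_j+\sum_j y_{ij}e_{j^*}$ and $u_{i'}=\sum_k x_{i'k}e_k+\sum_k y_{i'k}e_{k^*}$, then compute $\form{u_i}{u_{i'}}$ by bilinearity. Because $\form\cdot\cdot$ pairs only $e_j$ with $e_{j^*}$, with $\form{e_j}{e_{j^*}}=1$ and $\form{e_{j^*}}{e_j}=-1$, all cross terms except those pairing an unstarred index $j$ in $u_i$ with the same starred index $j$ in $u_{i'}$ (and vice versa) vanish. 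This collapses the double sum to $\sum_{j=1}^n\bigl(x_{ij}y_{i'j}-y_{ij}x_{i'j}\bigr)$, which is precisely the $(i,i')$ entry of $XY^t-YX^t=XY^t-(XY^t)^t$.

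Next I would note that $U$ is isotropic if and only if $\form{u_i}{u_{i'}}=0$ for every pair of basis vectors $u_i,u_{i'}$ (isotropy of a subspace is equivalent to isotropy on any spanning set, by bilinearity). Hence $U$ is isotropic $\iff XY^t-(XY^t)^t=0 \iff XY^t$ is symmetric. The diagonal case $i=i'$ gives $0$ automatically, consistent with the antisymmetry of the form, so no extra condition arises there.

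The remaining point to address is well-definedness: a representation depends on the choice of basis $\{u_1,\dots,u_k\}$, so I should remark that replacing the basis by another amounts to left-multiplying $\mform XY$ by an invertible $k\times k$ matrix $P$, sending $XY^t$ to $PXY^tP^t$, which is symmetric if and only if $XY^t$ is; thus the symmetry condition is an intrinsic property of $U$, not of the chosen representation. Conversely, any matrix $\mform XY$ of rank $k$ with $XY^t$ symmetric has rows spanning a $k$-dimensional subspace on which, by the computation above, the form vanishes, giving the reverse implication.

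I do not expect a genuine obstacle here; the only mild bookkeeping is keeping track of signs in $\form{e_j}{e_{j^*}}=1=-\form{e_{j^*}}{e_j}$ so that the cross terms assemble into $XY^t-YX^t$ rather than $XY^t+YX^t$. This is exactly the symplectic analogue of the standard fact that a subspace is isotropic for a symmetric form iff a representing matrix satisfies the corresponding quadratic relation.
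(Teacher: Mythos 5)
Your proof is correct. The paper itself omits the proof of this lemma (deferring to Borovik--Gelfand--White, p.~63), and your argument is exactly the standard computation that reference gives: expanding $\form{u_i}{u_{i'}}$ by bilinearity collapses to the $(i,i')$ entry of $XY^t-(XY^t)^t$, so isotropy on the spanning set is equivalent to symmetry of $XY^t$, and your remarks on basis-independence and the converse direction complete both implications.
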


We omit the proof for brevity, however a detailed discussion can be found in \cite[pp.~63]{borovik_gelfand_white}.

\subsection{Representable Lagrangian matroids}

It is clear that an $n\times 2n$ matrix $\mform XY$ will represent a Lagrangian subspace if, and only if the dimension of the row-space of $\mform XY$ is $n$ and $XY^t$ is an $n\times n$ symmetric matrix. Given such a matrix, let the columns of $X$ and $Y$ be indexed by the sets $[n]$ and $[n]^*$, respectively, and call an admissible $n$-subset $B$ of $[n]\cup[n]^*$ a base of $\mform XY$ if the columns indexed by $B$ form a non-zero $n\times n$ minor. Finally, let $\mathcal B$ be the collection of all bases of $\mform XY$.

\begin{theorem}If $U$ is a Lagrangian subspace, then $\mathcal B$ is the collection of bases of a Lagrangian matroid.\end{theorem}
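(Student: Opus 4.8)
The plan is to verify the symmetric exchange axiom of Definition~\ref{def:symm} for the collection $\mathcal B$ of bases of a representation $\mform XY$ of $U$, after first checking $\mathcal B\neq\emptyset$. For non-emptiness, note that an admissible $B$ is a base of $\mform XY$ exactly when the coordinate projection $U\to\mathbb F^{B}$ (in the basis $E$) is an isomorphism, i.e.\ when $U\cap\mathrm{span}(e_c:c\notin B)=0$; since $B$ is admissible, $\{c:c\notin B\}=B^*$, so it suffices to produce an admissible $A$ with $U\cap\mathrm{span}(e_a:a\in A)=0$ and put $B=A^*$. I would build $A$ greedily: starting from $A=\emptyset$, and given an admissible $A$ on an index set $I\subsetneq[n]$ with $U\cap\mathrm{span}(e_a:a\in A)=0$, pick $i\in[n]\setminus I$. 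If neither $A\cup\{i\}$ nor $A\cup\{i^*\}$ could be adjoined while keeping the intersection trivial, there would be vectors $u=w+e_i$ and $u'=w'+e_{i^*}$ in $U$ with $w,w'\in\mathrm{span}(e_a:a\in A)$ (the coefficients of $e_i$, $e_{i^*}$ being nonzero since $A$ is already trivial against $U$). Because $A$ is admissible, $\mathrm{span}(e_a:a\in A)$ is isotropic and orthogonal to both $e_i$ and $e_{i^*}$, so $\form{u}{u'}=\form{e_i}{e_{i^*}}=1\neq 0$, contradicting isotropy of $U$. Hence the procedure runs for $n$ steps and outputs a base.

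Next I would pass to a normal form. Relabelling the ground set by a permutation of the index pairs or by a swap $i\leftrightarrow i^*$ does not affect whether $\mathcal B$ satisfies the symmetric exchange axiom, and corresponds to permuting the columns of $\mform XY$ and interchanging columns $j\leftrightarrow j^*$; such an interchange turns ``$XY^t$ symmetric'' into ``$XDY^t$ symmetric'' for a diagonal sign matrix $D$. Using the base just found and then row operations on $U$ (which leave $\mathcal B$ unchanged), I may assume $X=I$, and then $C:=DY$ is symmetric. A Laplace expansion shows that the admissible set $S\cup([n]\setminus S)^*$ is a base if and only if the principal submatrix $C[[n]\setminus S]$ is non-singular; so, writing $T=[n]\setminus S$, the bases are indexed by $\mathcal T=\{T\subseteq[n]:\det C[T]\neq 0\}$ via $B_T=([n]\setminus T)\cup T^*$, with $\emptyset\in\mathcal T$.

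A short computation with symmetric differences gives $B_{T_1}\symmdiff B_{T_2}=(T_1\symmdiff T_2)\cup(T_1\symmdiff T_2)^*$, and, for $j,k$ with underlying indices $p,q\in[n]$, $B_{T_1}\symmdiff\{j,k,j^*,k^*\}=B_{T_1\symmdiff\{p,q\}}$ (reading $\{p,q\}$ as $\{p\}$ when $p=q$). Thus the symmetric exchange axiom for $\mathcal B$ becomes the following statement about $C$: for all $T_1,T_2\in\mathcal T$ and $p\in T_1\symmdiff T_2$, either $T_1\symmdiff\{p\}\in\mathcal T$, or there is $q\in(T_1\symmdiff T_2)\setminus\{p\}$ with $T_1\symmdiff\{p,q\}\in\mathcal T$. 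To prove this I would use the Desnanot--Jacobi identity
\[\det C[T]\cdot\det C[T\cup\{i,l\}]=\det C[T\cup\{i\}]\cdot\det C[T\cup\{l\}]-\det C[T\cup\{i\},T\cup\{l\}]\cdot\det C[T\cup\{l\},T\cup\{i\}],\]
valid for distinct $i,l\notin T$, together with the observation that, $C$ being symmetric, $C[T\cup\{l\},T\cup\{i\}]=C[T\cup\{i\},T\cup\{l\}]^{t}$, so the final product is a perfect square; it is exactly this ``squaredness'', absent for general symplectic matroids, that forces the exchanged index $q$ to occur among the admissible choices. After replacing $C$ by $C[T_1\cup T_2]$ (inside which all the relevant minors and the set $T_1\symmdiff T_2$ live), one splits into the cases $p\in T_1\setminus T_2$ and $p\in T_2\setminus T_1$ and, assuming $T_1\symmdiff\{p\}\notin\mathcal T$, uses the identity to express each candidate principal minor $\det C[T_1\symmdiff\{p,q\}]$ as a sign times a square of an off-diagonal minor of $C$; a rank argument using non-singularity of both $C[T_1]$ and $C[T_2]$ then shows that at least one such off-diagonal minor, with $q$ in the disagreement set $T_1\symmdiff T_2$, is non-zero.

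The main obstacle is this last step: organising the case analysis of the Desnanot--Jacobi identity while keeping the would-be exchanged index inside $T_1\symmdiff T_2$, and coping with the fact that intermediate principal minors such as $\det C[T_1\cap T_2]$ may vanish, which blocks the naive Schur-complement reduction and makes the rank argument the delicate point. The complete verification of this determinantal lemma is classical and can be found in~\cite{borovik_gelfand_white}; the content above is the translation of the statement into that form together with the non-emptiness argument, which is what makes the lemma applicable here.
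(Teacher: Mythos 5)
Your overall strategy is the same as the paper's: reduce to a representation $\mform{I_n}{Y}$ with $Y$ symmetric and exploit the fact that symmetry turns the relevant $2\times 2$ minors $y_{pp}y_{qq}-y_{pq}y_{qp}$ into $y_{pp}y_{qq}-y_{pq}^2$, so that vanishing forces individual entries to vanish. Two remarks. First, your non-emptiness argument (greedily building an admissible $A$ with $U\cap\mathrm{span}(e_a:a\in A)=0$, using isotropy of $U$ to rule out both $i$ and $i^*$ being obstructed) is correct and is a genuine addition: the paper silently assumes $\mathcal B\neq\emptyset$. Your handling of the column swaps via the sign matrix $D$ is also more careful than the paper's treatment of the same relabelling issue.

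Second, the step you flag as the main obstacle is self-inflicted: by fixing the normalisation once (so that only $\emptyset\in\mathcal T$ is guaranteed) and then trying to verify exchange for an arbitrary pair $T_1,T_2\in\mathcal T$, you are forced into the full Desnanot--Jacobi case analysis with possibly singular intermediate principal minors, which you then outsource to \cite{borovik_gelfand_white}. The paper's device is to re-normalise \emph{per pair of bases}: given $B_1,B_2\in\mathcal B$, row-reduce and relabel (exactly your $D$-trick) so that $B_1$ becomes $\{1,\dots,n\}$ and the representation becomes $\mform{I_n}{C}$ with $C$ symmetric; in your notation this sends $T_1$ to $\emptyset$ and $T_2$ to $T:=T_1\symmdiff T_2$. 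The exchange claim then reads: for $p\in T$, either $C_{pp}\neq 0$ or some $q\in T\setminus\{p\}$ has $C_{pp}C_{qq}-C_{pq}^2\neq 0$. If both fail for every $q$, then $C_{pp}=0$ gives $-C_{pq}^2=0$, hence $C_{pq}=0$ for all $q\in T$, so row $p$ of the principal submatrix $C[T]$ vanishes and $\det C[T]=0$, contradicting $T\in\mathcal T$. This closes the argument in two lines, with no rank argument and no issue about $\det C[T_1\cap T_2]$ vanishing. I recommend you replace the citation of the determinantal lemma by this per-pair reduction; everything else in your write-up then stands.
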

\begin{proof}For $B=\{i_1,\dots,i_n\}\in \mathcal B$, denote again with $B$ the corresponding $n\times n$ sub-matrix of $\mform XY$. If $(X|Y)$ represents $U$, then elementary row-operations on $\mform XY$ leave both $U$ and the collection $\mathcal B$ invariant.

Assume first that $B_1=\{1,2,\dots,n\}\in \mathcal B$ so that $U$ is represented by $\mform{I_n}Y$, with $Y=Y^t$. Suppose that $B_2\in\mathcal B$ agrees with $B_1$ on $n-k$ columns. For clarity we shall assume that $B_2$ agrees with $B_1$ on the last $n-k$ columns so that $B_2=\{1^*,\dots,k^*,k+1,\dots,n\}$, since the general case differs only in the change of appropriate indices.

If $\mathcal B$ is not a Lagrangian matroid, then there exists $i\in\{1,1^*,\dots,k,k^*\}$ such that for all $j\in\{1,1^*,\dots,k,k^*\}$ the determinants $\det(B_1\symmdiff\{i,i^*,j,j^*\}\})$ are all zero. We may assume that $i=1$ since the same argument applies to the $i\neq 1$ case.

Let $Y=(y_{pq})$ with $y_{pq}=y_{qp}$. That $\det(B_1\symmdiff\{1,1^*\})$ is zero has $y_{11}=0$ as a consequence. Similarly,  $\det(B_1\symmdiff\{1,1^*,j,j^*\})=0$ implies that $y_{1j}y_{j1}=0$, which in turn implies that $y_{1j}=0$ for all $j=1,\dots,n$. Therefore, the first $k$ elements in the top row of $Y$, which are also the first $k$ elements in the top row of $B_2$, are all zero. Moreover, $B_2$ agrees with $I_n$ on the last $n-k$ rows, hence it has a zero row. Therefore, $\det B_2=0$, a contradiction, since $B_2\in\mathcal B$.

Now, if $B_1=\{i_1,i_2,\dots,i_n\}\in\mathcal B$ we may preform row operations on the representation $\mform XY$ of $U$ so to obtain a representation $\mform{X'}{Y'}$ in which $B_1$ is the identity matrix. In order to completely reduce the general case to the simplified one from above, note that $\mform{X'}{Y'}$, with $X'=(x_{pq})$ and $Y'=(y_{pq})$ is symmetric means that for any two rows $r$ and $s$ we have
\[(x_{r1}y_{s1}-x_{s1}y_{r1})+(x_{r2}y_{s2}-x_{s2}y_{r2})
+\cdots+(x_{rn}y_{sn}-x_{sn}y_{rn})=0.\]\end{proof}

Lagrangian matroids arising from Lagrangian subspaces of a symplectic space are called \emph{(symplectically) representable}.

\begin{example}Let $\mform XY$ be the following matrix with $XY^t$ symmetric:
\[\left(\begin{array}{ccc|ccc}
1 & 1 & 1 & 0 & 0 & 0\\
0 & 0 & 0 & 1 & 1 & -2\\
2 & 0 & 1 & -2 & 1 & 1
\end{array}\right).\]
The bases of the corresponding representable Lagrangian matroid are
\[\mathcal B=\{123^*,12^*3,12^*3^*,1^*23,1^*23^*,1^*2^*3\}.\]
\end{example}

\subsection{Orthogonal Lagrangian matroids}Let us consider now $V$ as a standard orthogonal space, i.e.\ a $2n$-di\-men\-si\-o\-nal space endowed with a symmetric bilinear form $\form\cdot\cdot$ such that $\form{e_i}{e_j}=0$ whenever $i\neq j^*$, and $\mform{e_i}{e_{i^*}}=\mform{e_{i^*}}{e_i}=1$. In the same fashion as before we may consider a $k$-dimensional isotropic subspace $U$, and represent it by a matrix $\mform XY$. However, in this case a matrix $\mform XY$ will correspond to an isotropic subspace if, and only if $XY^t$ is anti-symmetric \cite[pp.~77]{borovik_gelfand_white}.

If $\mform XY$ represents a Lagrangian subspace, we may obtain a Lagrangian matroid via the same construction as in the symplectic case, and for such Lagrangian matroids we say that they are \emph{orthogonally} representable.

\begin{example}Let $\mform XY$ be the following matrix with $XY^t$ anti-symmetric:
\[\left(\begin{array}{ccc|ccc}
0 & 1 & 1 & 1 & 0 & 0\\
-1 & 0 & 0 & 0 & 1 & 0\\
-1 & 0 & 0 & 0 & 0 & 1
\end{array}\right).\]
The bases of the corresponding Lagrangian matroid are
\[\mathcal B=\{123^*,12^*3,1^*2^*3^*\}.\]
\end{example}

\subsection{Maps on surfaces and Lagrangian matroids}

Let $G$ be a connected graph, possibly with loops and multiple edges. By a surface we shall understand a connected, closed and orientable surface of genus $g\geq 0$. We say that an embedding $i(G)$ of $G$ into a surface $S$ is cellular if the vertices of $G$ are points on $S$, the edges are segments on $S$ homeomorphic to closed $1$-cells which intersect only at the vertices, and $S\setminus i(G)$ is a disjoint union of connected components called \emph{faces}, each homeomorphic to an open $2$-cell.

Any connected graph can be cellularly embedded on some surface \cite{lindsay59}, and for such an embedding we shall say that it is a \emph{map on a surface}. In some literature the faces of a map are also called \emph{countries}, and a distinguished point within a country is called a \emph{capitol}. Two maps $\mathcal M_1$ and $\mathcal M_2$ on a surface $S$ are said to be isomorphic if there is an orientation-preserving homeomorphism $S\to S$ which restricts to a graph-isomorphism between the underlying graphs of $\mathcal M_1$ and $\mathcal M_2$.

\begin{constr}\emph{Dual map}. Let $\mathcal M$ be a map on $S$ with $E=\{e_1,\dots,e_n\}$ as its edge set. A dual map of $\mathcal M$ is a map on $S$ obtained by the following construction: for each face $f$ of $M$ choose a distinguished point in it and call it the \emph{face-centre} of $f$. If $e_j$ is on the boundary of two faces $f_1$ and $f_2$, define $e_j^*$ to be a segment (1-cell) connecting the face-centres of $f_1$ and $f_2$ passing through only $e_j$, exactly once. If $e_j$ is on the boundary of a single face $f$, then connect the face-centre to itself by a segment $e_j^*$ passing only through $e_j$, exactly once. The map with the face-centres of $\mathcal M$ as its vertex-set and the segments $E^*=\{e_1^*,\dots,e_n^*\}$ as its edge set is called \emph{a dual map} of $\mathcal M$.
\end{constr}

\begin{rmrk}The construction of a dual map of $\map$ is not rigid; we can obtain many dual maps of $\mathcal M$ since we have an uncountable amount of choices for the face-centres and the intersection points of $e_j$ and $e_j^*$. However, any two dual maps of $\mathcal M$ are isomorphic. We shall abuse this fact in order to speak of \emph{the} dual map of $\mathcal M$, denoted by $\mathcal M^*$.
\end{rmrk}

The vertices and edges of $\mathcal M^*$ are also called covertices and coedges, respectively, and we understand $e_j^*$ and $v_j^*$ to be the unique coedge and covertex of $e_j$ and $v_j$, respectively. The sets of vertices and covertices are denoted by $V$ and $V^*$, while the sets of edges and coedges are denoted by $E$ and $E^*$, respectively.

\subsection{Lagrangian matroids from maps on surfaces}\label{section:maps}

Let $\mathcal M$ be a map on $S$ with $n$ edges and $E$ and $E^*$ as its set of edges and coedges.

\begin{definition}A \emph{base} of $\mathcal M$ is an admissible $n$-subset $B$ of $E\cup E^*$ such that $S\setminus B$ is connected.\end{definition}

If $\mathcal M$ is a map on the sphere, i.e.\ if it is a plane graph, then every spanning tree of $\mathcal M$ is clearly contained in a unique base, and conversely, every base contains a unique spanning tree. If $\mathcal B$ is the collection of all bases of $\mathcal M$, then the collection
\[E\cap \mathcal B=\{E\cap B\mid B\in\mathcal B\}\]
corresponds to the set of spanning trees of the underlying graph of $\map$, i.e.\ to its graphic matroid. This need not be the case for maps on genus $g>0$ surfaces: the only spanning tree of the figure 8 graph is the trivial tree with one vertex and no edges, yet the figure 8 is its own base when embedded on a genus 1 surface. However, the collection $\mathcal B$ forms a Lagrangian orthogonal matroid, which shall be denoted by $\dtroid \map$.

\begin{theorem}\label{theorem:maps}Let $\mathcal M$ be a map on a surface $S$. The set $\mathcal B$ of its bases forms a Lagrangian matroid orthogonally representable over $\q$.\end{theorem}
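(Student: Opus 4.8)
The plan is to construct an explicit $n \times 2n$ matrix over $\q$ whose associated Lagrangian matroid (in the orthogonal sense) has exactly the bases of $\map$, and then verify the two required conditions: that $XY^t$ is anti-symmetric, and that the non-vanishing $n \times n$ minors are indexed precisely by the admissible $n$-subsets $B \subseteq E \cup E^*$ with $S \setminus B$ connected. The natural candidate comes from topology: take $L$ to be (the image in a suitable quotient of) the first homology $\h$ of the surface punctured at all vertices and covertices. Each edge $e_j$ and coedge $e_j^*$ gives a $1$-cycle (or a relative cycle) in this punctured surface, and I would take $X$ to record the coefficients of the $e_j$-cycles and $Y$ the coefficients of the $e_j^*$-cycles with respect to a fixed homology basis. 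The intersection form on the surface is anti-symmetric, which is exactly what forces $XY^t$ anti-symmetric; and $e_j$, $e_j^*$ intersect transversally in a single point, which gives the pairing $\form{e_i}{e_{i^*}} = 1$ while $\form{e_i}{e_j} = 0$ for $i \neq j^*$, matching the standard orthogonal form in the paper's normalisation (over $\q$, after clearing signs/orientation conventions).

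First I would set up the homology computation carefully: fix the surface $S$ of genus $g$, let $S' = S \setminus (V \cup V^*)$, and compute $H_1(S')$ together with the images of the classes $[e_j], [e_j^*]$. The key point is that $\{e_j, e_j^*\}$ behave like a symplectic-type hyperbolic pair relative to the vertex/covertex punctures, so the span $L$ of all these classes is an $n$-dimensional isotropic subspace of a $2n$-dimensional orthogonal space over $\q$ — i.e. Lagrangian. This requires knowing that the rank is exactly $n$ (not less), which follows because the $e_j$ alone already span an $n$-dimensional subspace on which the form vanishes (edges of the original map pairwise don't intersect off the vertices), and dually for the $e_j^*$.

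Next I would identify the bases. The combinatorial claim is: an admissible $n$-set $B \subseteq E \cup E^*$ indexes a non-zero minor of $(X|Y)$ if and only if the classes $\{[e] : e \in B\}$ are linearly independent in $H_1(S')$, and this in turn holds if and only if $S \setminus B$ is connected. The second equivalence is the crux: deleting from $S$ a collection of $n$ arcs, one "transverse" representative per edge-index, yields a connected surface precisely when no non-trivial combination of those arcs bounds — a standard fact relating connectivity of a cut surface to independence of the cutting cycles in homology (Euler-characteristic / Mayer–Vietoris bookkeeping). I would phrase this via the long exact sequence of the pair $(S, B)$ or by a direct counting argument on the CW-structure induced by $B$.

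The main obstacle will be the middle equivalence — pinning down the precise relationship between "$B$ is a base of the homology representation $(X|Y)$" and "the cycles in $B$ are homologically independent." One has to be careful that the quotient in which $L$ lives is chosen so that minors of $(X|Y)$ detect exactly homological independence of the selected classes, with no spurious cancellations coming from the puncture classes. I expect this is handled by working in $H_1(S')$ modulo the subspace spanned by the small loops around the punctures (or by using the intersection pairing with the complementary classes as coordinates), so that the coordinate functionals separating the basis vectors are exactly intersection numbers with a dual family; then a minor of $(X|Y)$ is, up to sign, a determinant of intersection numbers, which is non-zero iff the chosen classes are independent. Once that bookkeeping is set up, Theorem \ref{theorem:maps} follows by combining it with the purely combinatorial connectivity criterion and the anti-symmetry of the intersection form.
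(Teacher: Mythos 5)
Your overall strategy is the paper's: realise the bases of $\map$ as the bases of a Lagrangian subspace of $\orth$ built from the first homology of $S$ punctured at the vertices and covertices, with the intersection pairing supplying both the coordinates and the isotropy. (Note that the paper's matrix is the transpose of the one you describe: its rows are cycles $c\in H_1(S\setminus\{V\cup V^*\})$ and its columns are indexed by $E\cup E^*$, the entry in position $(c,e)$ being the intersection index; this matters, because the definition of a representable Lagrangian matroid requires the \emph{columns} to be indexed by $E\cup E^*$, and ``$XY^t$ anti-symmetric'' is a condition on the row space.)

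There is, however, a genuine error in your argument that the subspace has dimension exactly $n$. You claim the classes of the $e_j$ alone span an $n$-dimensional subspace because edges do not intersect away from the vertices; pairwise non-intersection gives isotropy of their span, not its dimension, and the dimension claim is false for any map with more than one face. Each face imposes a relation $\sum\pm e_j=0$ among the edges on its boundary (in the relevant relative homology), so the edge classes span only a subspace of dimension equal to the rank of $E$ in $\dtroid\map$: for the triangle on the sphere this is $2$, not $3$, as one sees from the three columns indexed by $E$ in the paper's own representation of that map. So this route to ``rank $=n$'' collapses. The paper obtains the dimension, and simultaneously the identification of bases --- the ``middle equivalence'' you flag as your main obstacle --- from one explicit construction: given a base $B$ of $\map$, connectedness of $S\setminus B$ lets one draw, for each $e\in B$, a cycle $c_e$ in $S\setminus\{V\cup V^*\}$ meeting $e$ exactly once and meeting no other element of $B$; these $n$ cycles produce an identity sub-matrix in the columns indexed by $B$, proving at once that the subspace is $n$-dimensional (hence Lagrangian) and that $B$ is a base of the representation, with the converse read off from the same picture. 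Isotropy itself is not a one-line appeal to anti-symmetry of the intersection form (the edge classes are only relative cycles, and one must first kill the loops around the punctures); the paper proves it by induction on $|V\cup V^*|$ via contraction of a contractible edge, with the base case $|V\cup V^*|=2$ computed explicitly in $H_1(S)$. Without the dual cycles $c_e$ and the contraction induction, your outline leaves both the dimension count and the base correspondence unestablished.
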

\begin{proof}[Proof (sketch)]
The proof involves several parts, and its outline is as follows: first we introduce a symmetric bilinear form on $\q^E\cup \q^{E^*}$ so that $E\cup E^*$ forms a standard basis for it. Next, for a cycle $c\in H_1(S\setminus\{V\cup V^*\})$ we construct the incidence vector $\iota(c)\in\orth$ of $c$. Finally, we consider the image of $H_1(S\setminus\{V\cup V^*\})$ in $\orth$ under the map $c\mapsto \iota(c)$; it will turn out that this image is a Lagrangian subspace of $\orth$ whose bases correspond to the bases of $\mathcal M$.

We shall omit most details of the proof, however the the interested reader may look them up in Theorem 4.3.1 and section 4.3.2 in \cite{borovik_gelfand_white}.\vspace{\topsep}

Let $S$ have the positive, counter-clockwise orientation. Orient the edges of $\mathcal M$ arbitrarily, and orient the edges of $\mathcal M^*$ so that after a counter-clockwise turn, the orientation of $e$ coincides with the orientation of its coedge $e^*$. Equivalently, orient the edges of $\mathcal M^*$ so that $e^*$ intersects $e$ \emph{from the right}, as in figure \ref{figure: intersection index}. In that case, we also say that the \emph{intersection index} $(e,e^*)$ is equal to 1, and that $(e,e^*)=-1$ if $e^*$ intersects $e$ from the left.
\begin{figure}[ht]
  \centering
  \includegraphics[trim={5cm 16cm 5cm 7cm},scale=.8]{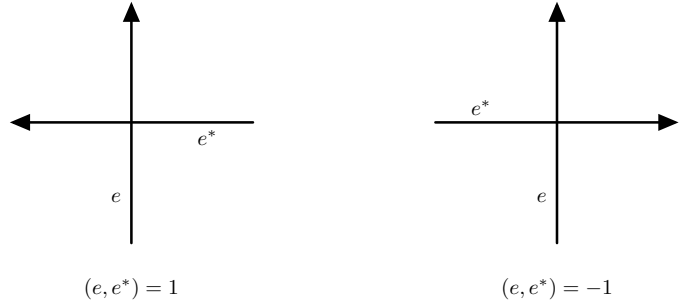}\\
  \caption{The intersection index $(e,e^*)$.}
  \label{figure: intersection index}
\end{figure}

The intersection index can be extended to the first homology group of $S$, and $S\setminus\{V\cup V^*\}$ in the obvious way. We shall use homology with coefficients in $\q$ so that $H_1(S)$ and $H_1(S\setminus\{V\cup V^*\})$ are vector spaces.\vspace{\topsep}

Define a symmetric bilinear form $\form\cdot\cdot$ on $\orth$ such that
\[\form{e}{e^*}=1,\quad\txn{and}\quad\form ef=0\txn{ whenever }f\neq e^*.\]
Hence $E\cup E^*$ is a standard basis for $\orth$ as an orthogonal space over $\q$.\vspace{\topsep}

For $c\in H_1(S\setminus\{V\cup V^*\})$ let
\[\iota(c)=\sum_{e\in E\cup E^*}(c,e)e\]
denote the \emph{incidence vector} of $c$ in $\orth$. Our aim is to show that the image of $H_1(S\setminus\{V\cup V^*\})$ under the map $c\mapsto \iota(c)$ is a Lagrangian subspace of $\orth$, with bases corresponding to the bases of $\mathcal M$.\vspace{\topsep}

Define a scalar product $\sprod\cdot\cdot$ on $H_1(S\setminus\{V\cup V^*\})$ with
\[\sprod cd=\form{\iota(c)}{\iota(d)}.\]
That $\iota(H_1(S\setminus\{V\cup V^*\}))$ is an isotropic subspace of $\orth$ will follow if we show that the scalar product $\sprod\cdot\cdot$ vanishes on $H_1(S\setminus\{V\cup V^*\})$. This is accomplished by considering the kernel $K$ of the canonical projection
\[H_1(S\setminus\{V\cup V^*\})\to H_1(S),\]
and noting that $\sprod{k}{c}=0$ for all $k\in K$ and $c\in H_1(S\setminus\{V\cup V^*\})$.\vspace{\topsep}

We now induce a scalar product on $H_1(S)$ denoted again by $\sprod\cdot\cdot$ defined by
\[\sprod cd=\sprod {c'}{d'},\txn{ for all }c,d\in H_1(S),\]
where $c'$ and $d'$ are any preimages of $c$ and $d$ in $H_1(S\setminus\{V\cup V^*\})$ under the canonical projection; it will be well defined since the scalar product on $H_1(S\setminus\{V\cup V^*\})$ vanishes on the kernel $K$.

Lifting the scalar product $\sprod\cdot\cdot$ to $H_1(S)$ allows us to proceed with the proof by induction: let $|V\cup V^*|=2$. Euler's formula implies that $\map$ has $2g$ edges, and $2g$ coedges, where $g$ is the genus of $S$. The group $\h$ is generated by two cycles $k_1$ and $k_2$ around the vertex and the covertex of $\map$, respectively, and clearly we have $\iota(k_1)=\iota(k_2)=0$. Hence $K$ is contained in the kernel $\ker\iota$, so we can lift $\iota$ to a map $H_1(S)\to\orth$.

The vector space $H_1(S)$ is $2g$-dimensional with $E$ and $E^*$ as its bases. Moreover, the intersection index
\[H_1(S)\times H_1(S)\to H_0(S)=\q\]
is a skew-symmetric form on $H_1(S)$, so $E$ and $E^*$ are actually dual bases. Hence, a cycle $c\in H_1(S)$ can be expressed as the sum $\sum c_ie_i$ for some coefficients $c_i$, and therefore the incidence vector $\iota(c)$ can be expressed as the sum $\sum c_i\iota(e_i)$. Since
\[\iota(e_i)=\sum_{j\in[n]\cup[n^*]}(e_i,e_j)e_j=\sum_{j\in [n]}\left((e_i,e_j)e_j+(e_i,e_j^*)e_j^*\right)\]
we can see that $[\iota(e_i),\iota(e_j)]=0$ by expanding, and taking into account that $(e_i,e_j^*)=0$, whenever $i\neq j^*$. Therefore, $\iota(\h)$ is a $2g$-dimensional isotropic subspace of $\orth$. Since $\orth$ is $4g$-dimensional, $\iota(\h)$ is Lagrangian.\vspace{\topsep}

The induction step follows from the fact that a map with $|V\cup V^*|>2$ vertices and covertices must have a contractible edge, and that contracting a contractible edge does not have any effect on the scalar product $\sprod\cdot\cdot$ on $\h$. This result is stated as lemma 4.3.4 in \cite{borovik_gelfand_white}, and we omit the details. Induction shows that the subspace $\iota(\h)$ is isotropic; we still have to show that it is Lagrangian, and that its bases correspond to the bases of $\map$.\vspace{\topsep}

Let $M$ be the infinite matrix with $[n]\cup[n]^*$ columns, and the elements of $\iota(\h)$ as its rows. Let $B$ be a basis of $\map$. By definition, the space $S\setminus B$ is connected, so for each edge or coedge $e$ in $B$ we can draw a cycle $c_e$ which intersects $e$ exactly once, without intersecting other edges or coedges in $B$.  Now consider the $n\times n$ sub-matrix of $M$ with columns indexed by $B$, and rows corresponding to $c_e$, for $e\in B$. By relabelling, we can rearrange this matrix so to get the identity matrix $I_n$. Hence, the $n\times n$ minor corresponding to $B$ has non-zero determinant, and the columns indexed by $B$ are linearly independent.

Conversely, let $B$ be a set of $n$ linearly independent columns in $M$. Since the $n$ columns in $B$ are linearly independent, we can find $n$ rows, each row corresponding to a cycle in $\h$, so that the $n\times n$ sub-matrix obtained in this way has non-zero determinant. Furthermore, by summing the cycles (as homotopy classes), and possibly after relabelling, we can choose this sub-matrix to be the identity matrix $I_n$. By definition of the incidence vector, each of the $n$ chosen cycles intersects only one edge or coedge in $B$, exactly once, and each edge or coedge in $B$ is intersected by exactly one of the $n$ cycles. Therefore, $S\setminus B$ must be connected.

Since the sub-matrix of $M$ corresponding to a base $B$ has rank $n$, and $\orth$ has dimension $|[n]\cup[n]^*|=2n$, it represents a Lagrangian subspace of $\orth$, and hence $\iota(\h)$ is a Lagrangian subspace of $\orth$.\end{proof}

\begin{example}\label{example: representation}Let $\mathcal M$ be the map on a genus 1 surface as shown and labelled in figure \ref{figure: map example 1}. The collection of its bases is
\[\mathcal B=\{123^*4^*,12^*34^*,1^*2^*3^*4^*\}.\]

\begin{figure}[ht]
  \centering
  \includegraphics[trim={5cm 14cm 5cm 5cm},scale=.6]{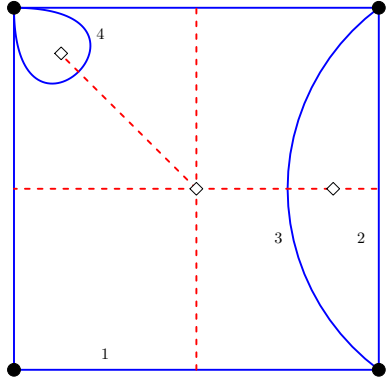}\\
  \caption{The map $\mathcal M$ from example \ref{example: representation}, together with its dual map $\mathcal M^*$ (dashed).}
  \label{figure: map example 1}
\end{figure}

To represent $\mathcal B$ as a Lagrangian subspace of $\orth$ we have to pick a base, say $B=123^*4^*$, and find cycles $c_i\in H_1(S\setminus\{V\cup V^*\})$ for $i\in B$ such that $c_i$ doesn't intersect the edges $e_j$, for all $j\in B\setminus \{i\}$. We may choose them as in figure \ref{figure: map representation example 1}.

\begin{figure}[ht]
  \centering
  \includegraphics[trim={5cm 14cm 5cm 5cm},scale=.6]{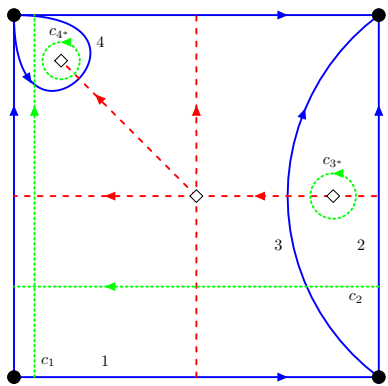}\\
  \caption{The cycles $c_i$ for $B=123^*4^*$ (dotted).}
  \label{figure: map representation example 1}
\end{figure}

We can now form the $4\times 2\cdot 4$ matrix with rows and columns indexed by $B$ and $[4]\cup[4]^*$, respectively. The entry $(i~j)$ will be the intersection index $\form{j}{c_i}$. A representation of $\mathcal B$ is given by the following anti-symmetric matrix:
\[
\begin{blockarray}{lcccccccc}
 \empty & 1 & 2 & 3 & 4 & 1^* & 2^* & 3^* & 4^*\\
 \begin{block}{l(cccc|cccc)}
    c_1 & 1 & 0 & 0 & 0 & 0 & 0 & -1 & 0\\
    c_2 & 0 & 1 & 1 & 0 & 1 & 0 & 0 & 0\\
    c_{3^*} & 0 & 0 & 0 & 0 & 0 & -1 & 1 & 0\\
    c_{4^*} & 0 & 0 & 0 & 0 & 0 & 0 & 0 & 1\\
 \end{block}
\end{blockarray}\ .
\]
The reader may check that the only admissible non-zero $n\times n$ minors are the ones indexed by $\mathcal B$.
\end{example}

\section{Partial duals}\label{section: partial duals}

The partial dual of a map $\mathcal M$ with respect to some subset $A$ of its edge-set is a map $\partial_A\mathcal M$ which can be thought of as an intermediate step between $\mathcal M$ and its dual map $\mathcal M^*$. The operation of partial duality was introduced by Chmutov in \cite{chmutov09}. We shall work with the algebraic description of partial duals in terms of the cartographic group of $\mathcal M$ due to Chmutov and Vignes-Tourneret \cite{chmutov14}.

\subsection{The cartographic group}Let $\mathcal M$ be a map on $S$, and let us subdivide each edge of $\mathcal M$ into two \emph{half-edges} by adding a distinguished vertex called \emph{the edge midpoint}. Thus, each edge of $\mathcal M$ has two half-edges, and two half-edges of an edge are incident to the same vertex if, and only if the edge is a loop.

A map $\mathcal M$ with $n$ edges induces a triple of permutations $\sigma$, $\alpha$ and $\varphi$ acting on the half-edges in the following way: let us label the half-edges of $\mathcal M$ with the elements of the set $[2n]$ so that when standing at a vertex, and looking towards an adjacent edge midpoint, the label is placed on the left. The triple of permutations $\sigma,\alpha,\varphi\in S_{2n}$ is defined as follows:
\begin{itemize}
  \item the disjoint cycles of $\sigma$ correspond to the counter-clockwise cyclic orderings of labels around each vertex;
  \item the disjoint cycles of $\alpha$ correspond to the counter-clockwise cyclic orderings of labels around each edge midpoint;
  \item the disjoint cycles of $\varphi$ correspond to the counter-clockwise cyclic orderings of labels within each face.
\end{itemize}
Consequently, $\alpha$ is a fixed-point free involution, the product $\sigma\alpha\varphi$ is trivial, and the group generated by $\sigma$, $\alpha$ and $\varphi$ acts transitively on the set $[2n]$.

\begin{definition}The subgroup $\left\langle\sigma,\alpha,\varphi\right\rangle$ of $S_{2n}$ generated by $\sigma$, $\alpha$ and $\varphi$ is called \emph{the cartographic group} of $\mathcal M$.\end{definition}

\begin{example}Figure \ref{figure:example} shows a genus 0 map with two vertices, edges and faces. The mark $\times$ denotes the midpoint of an edge. The permutations $\sigma$, $\alpha$ and $\phi$ are
\[\sigma=(1)(2~3~4), \alpha=(1~2)(3~4), \varphi=(1~4~2)(3),\]
and the cartographic group $\langle\sigma,\alpha,\varphi\rangle$ is the alternating group $A_4$ on 4 letters.

\begin{figure}[ht]
\centering
\includegraphics[trim={5cm 19.5cm 5cm 4.5cm}]{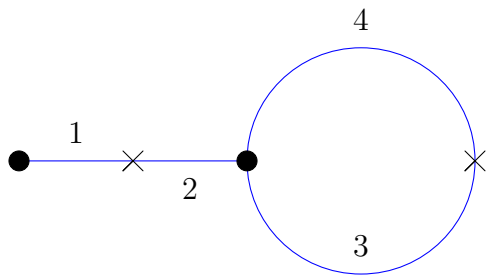}
\caption{A genus 0 map with 2 vertices, edges and faces. The half-edges are labelled so that when standing at a vertex, and looking towards an adjacent edge midpoint (represented by $\times$), the label is placed on the left.}
\label{figure:example}
\end{figure}
\end{example}

\begin{rmrk}The cartographic group is actually defined up to conjugation, since a relabelling of the half-edges of $\mathcal M$ corresponds to the simultaneous conjugation of $\sigma$, $\alpha$ and $\varphi$ by some element of $S_{2n}$. However, permutation groups are conjugate if, and only if they are isomorphic, so we shall talk of \emph{the} cartographic group.\end{rmrk}

Conversely, given a 2-generated subgroup $\langle x,y\rangle$ of $S_{2n}$ acting transitively on the set $[2n]$, and such that $y$ is a fixed-point free involution, an up to isomorphism unique map $\mathcal M$ can be recovered. The permutations $x$ and $y$ are responsible for reconstructing the vertices and edges, whilst the permutation $\inv{(xy)}$ keeps track of the gluing instructions for the faces. Hence, isomorphism classes of maps on surfaces with $n$ edges are in a one-to-one correspondence with the conjugacy classes of 2-generated transitive subgroups of $S_{2n}$.\vspace{\topsep}

\begin{rmrk}The cartographic group is a special instance of a \emph{monodromy group}. In general, a monodromy group is a 2-generated subgroup $\langle\sigma,\alpha\rangle$ of $S_n$ acting transitively on $[n]$ with no restrictions on the permutation $\alpha$, i.e.\ it doesn't necessarily have to be a fixed-point free involution. If $\alpha$ has a fixed point, then $\map$ will have a half-edge which doesn't glue glue into any edge. Furthermore, if $\alpha$ is not an involution, then there might be several half-edges that glue together and form a \emph{hyper-edge} (an edge that is incident with three or more vertices).  
\end{rmrk}

The genus of the surface of embedding can be computed directly from the cartographic group: clearly, the disjoint cycles of $\sigma$, $\alpha$ and $\varphi$ correspond bijectively to the vertices, edges and faces of $\mathcal M$. Therefore the Euler-Poincar\'e formula determines the genus:
\[2-2g=n(\sigma)-n(\alpha)+n(\varphi),\]
where $n(g)$ stands for the number of disjoint cycles of $g\in S_{2n}$.

\subsection{Duality and partial duals}It is not difficult to verify that if a map $\mathcal M$ corresponds to $\pergrp$, then the dual map $\mathcal M^*$ corresponds to $\left\langle\inv\varphi,\inv\alpha,\inv\sigma\right\rangle$.

\begin{definition}Let $\mathcal M = \pergrp$ be a map with $n$ edges, and let $\alpha=c_1c_2\cdots c_n$ be given as a product of disjoint transpositions so that $c_j$ corresponds to the edge $j$. The \emph{partial dual with respect to an edge} $j$ \emph{of} $\mathcal M$ is the map
\[\partial_j \mathcal M = \pergrpj.\]
\end{definition}

\begin{theorem}\label{theorem: partial}Partial dual with respect to an edge $j$ of a map $\mathcal M$ is well defined, i.e.\ $\sigma c_j\alpha c_j \varphi=1$ and the group $\pergrpj$ acts transitively on $\{1,\dots,2n\}$.\end{theorem}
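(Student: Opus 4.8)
\emph{Plan.} The statement has two parts, and I would treat them separately: the relation $\sigma c_j\alpha c_j\varphi=1$ is a one-line computation, while the transitivity of $\pergrpj$ on $\{1,\dots,2n\}$ calls for a short argument tracking orbits rather than groups.

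\emph{The relation.} I would observe that $c_j$ is one of the disjoint transpositions occurring in the factorisation $\alpha=c_1\cdots c_n$, so $c_j$ commutes with $\alpha$, and of course $c_j^2=1$. Hence
\[\sigma c_j\alpha c_j\varphi=\sigma\alpha c_j c_j\varphi=\sigma\alpha\varphi=1,\]
the last equality being the defining relation $\sigma\alpha\varphi=1$ of the cartographic group of $\mathcal M$. In particular $\alpha$ remains a fixed-point-free involution, so the triple $(\sigma c_j,\alpha,c_j\varphi)$ satisfies all the formal requirements of a cartographic group except possibly transitivity.

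\emph{Transitivity.} I would first cut down the generators. From $\sigma\alpha\varphi=1$ we get $\varphi=\inv{(\sigma\alpha)}$, so $\pergrp=\langle\sigma,\alpha\rangle$, which is transitive by hypothesis; and from the relation just established we likewise get $c_j\varphi=\inv{(\sigma c_j\alpha)}\in\langle\sigma c_j,\alpha\rangle$, so $\pergrpj=\langle\sigma c_j,\alpha\rangle$. Write $c_j=(a\ b)$, where $a,b$ are the two half-edges of the edge $j$; then, since the transpositions in $\alpha=c_1\cdots c_n$ are disjoint, $\alpha(a)=b$, while $\sigma c_j$ coincides with $\sigma$ except that $(\sigma c_j)(a)=\sigma(b)$ and $(\sigma c_j)(b)=\sigma(a)$. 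The key claim is that for every $x$ the points $x$ and $\sigma(x)$ lie in one and the same orbit of $\langle\sigma c_j,\alpha\rangle$: for $x\notin\{a,b\}$ this is clear since $\sigma(x)=(\sigma c_j)(x)$; for $x=a$ one passes $a\mapsto b$ by $\alpha$ and then $b\mapsto(\sigma c_j)(b)=\sigma(a)$ by $\sigma c_j$; the case $x=b$ is symmetric. Since trivially $x$ and $\alpha(x)$ always lie in a common $\langle\sigma c_j,\alpha\rangle$-orbit, an induction on the length of a word in $\sigma^{\pm1},\alpha^{\pm1}$ then shows that the $\langle\sigma,\alpha\rangle$-orbit of any point is contained in its $\langle\sigma c_j,\alpha\rangle$-orbit. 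As $\langle\sigma,\alpha\rangle$ is transitive, so is $\langle\sigma c_j,\alpha\rangle=\pergrpj$.

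\emph{Expected obstacle.} The only real pitfall is the temptation to prove the stronger statement $\langle\sigma c_j,\alpha\rangle=\langle\sigma,\alpha\rangle$ inside $S_{2n}$, which would give transitivity for free. This is false in general already for small maps: for the one-vertex, two-edge, one-face map on the torus with $\sigma=(1\ 3\ 2\ 4)$ and $\alpha=(1\ 2)(3\ 4)$, forming $\partial_1\mathcal M$ replaces the cyclic cartographic group $\langle\sigma,\alpha\rangle\cong C_4$ by a Klein four-group. So the argument must be run at the level of orbits, with the $\alpha$-edge $\{a,b\}$ acting as a bridge; this is the conceptual point to get right, but once it is isolated the remaining bookkeeping is entirely routine.
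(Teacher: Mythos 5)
Your proof is correct, and it takes a genuinely different route from the paper's. For the relation $\sigma c_j\alpha c_j\varphi=1$ both arguments are identical (disjointness of the $c_i$ gives $[c_j,\alpha]=1$). For transitivity, however, the paper argues topologically: it deletes the edge $j$ from $\map$, splits into cases according to whether $\map\setminus j$ is connected, and claims that replacing $\sigma$ by $\sigma c_j$ in a word $g$ with $a^g=b$ "does not affect how the half-edge $a$ moves through $\map\setminus j$", bridging the two components (when there are two) via half-edges $d_1,d_2$ incident to the endpoints of $j$ with $d_1^{\sigma c_j}=d_2$; loops are handled by a separate vertex-splitting reduction. Your argument instead stays entirely inside $S_{2n}$: you reduce both groups to two generators, show that $x$ and $\sigma(x)$ always lie in a common $\left\langle\sigma c_j,\alpha\right\rangle$-orbit (using $\alpha$ as the bridge across $\{a,b\}$), and conclude by induction on word length that every $\pergrp$-orbit is contained in a $\pergrpj$-orbit. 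This buys you a uniform treatment of loops and non-loops, and it replaces the paper's somewhat informal "moves through the map" step with a verifiable orbit computation; the paper's version, in exchange, makes the geometric content (contract $j$, reattach as a loop) visible. Your cautionary example showing $\left\langle\sigma c_j,\alpha\right\rangle\neq\left\langle\sigma,\alpha\right\rangle$ in general (the $C_4$ versus Klein four-group torus map) is correct and worth keeping, since it explains why the argument must be run at the level of orbits rather than subgroups.
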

\begin{proof}Since $c_j$ commutes with $\alpha$ we clearly have $\sigma c_j\alpha c_j \varphi=1$. Now let $a,b\in\{1,\dots,2n\}$, and suppose $c_j$ corresponds to an edge $j$ in $\mathcal M$ which is not a loop, and consider the map $\mathcal M\setminus j$ obtained from $\mathcal M$ by deleting $j$. If $\mathcal M\setminus j$ is connected, and $g\in\per$ is such $a^g=b$, then replacing every occurrence of $\sigma$ in $g$ with $\sigma c_j$ will not have any effect on how the half-edge $a$ moves through the map $\mathcal M\setminus j$ to get to $b$. Hence if we denote by $\hat g$ such an element in $\pergrpj$, we will have $a^{\hat g}=b$.

If $\mathcal M\setminus j$ has $\hat{\mathcal M}$ and $\tilde{\mathcal{M}}$ as its two connected components with $a\in\hat{\mathcal M}$ and $b\in\tilde{\mathcal{M}}$, then we can map them to the half-edges $d_1\in\hat{\mathcal M}$ and $d_2\in\tilde{\mathcal{M}}$ incident to the vertices of $e$ by some permutations $g_1$ and $g_2$ which again do not depend on $c_j$. Moreover, $d_1$ and $d_2$ can be chosen such that $d_1^{\sigma c_j}=d_2$. Therefore, if $\hat g_1$ and $\hat g_2$ are the permutations in $\pergrpj$ obtained from $g_1$ and $g_2$ by replacing every occurrence of $\sigma$ with $\sigma c_j$ we get
\[a^{\hat g_1}=d_1,\quad d_1^{\sigma c_j}=d_2,\quad b^{\hat g_2}=d_2,\]
from which $a^{\hat g_1 \sigma c_j \inv{\hat g_2}}=b$ follows.

The case in which $e$ is a loop is reduced to the previous case by first deleting $e$ from $\mathcal M$, and then considering the possibly disconnected map obtained from $\mathcal M\setminus j$ by splitting the vertex that was incident to $e$.\end{proof}

To \emph{see} how $\map$ transforms to $\partial_j\map$, consider first an edge $j$ of $\map$ which is not a loop. Let $a$ and $b$ be the half-edges of $j$, so that $v_a$ and $v_b$ are disjoint cycles in $\sigma$ with $a\in v_a$ and $b \in v_b$. To get the map corresponding to $\partial_j\map$, first contract $j$, and then attach a loop with half-edges labelled by $a$ and $b$ to the new vertex, so that the order of half-edges at the new vertex corresponds to $v_av_b(a~b)$, as in \mbox{figure \ref{figure: interpretation}}.

If $j$ is a loop, the transformation is reversed: first we delete $j$, and then expand its base vertex into a new edge $j$ with $a$ and $b$ as its half-edges so that the cyclic orderings of half-edges around the new vertices correspond to the two cycles of  $v_a(a~b)=v_b(a~b)$.

\begin{figure}[ht]
  \centering
  \includegraphics[trim={10cm 11.5cm 5cm 5.5cm},scale=.6]{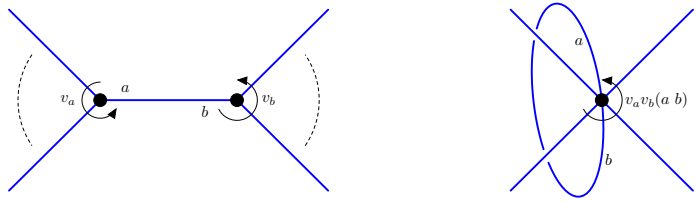}\\
  \caption{Transforming $\map$ to $\partial_j M$. First we contract, and then attach a loop so that the order of half-edges corresponds to $v_av_b(a~b)$.}\label{figure: interpretation}
\end{figure}

\begin{definition}Let $\mathcal M = \pergrp$ be a map with $n$ edges, $E$ its edge set, and let $\alpha=c_1c_2\cdots c_n$ be given as a product of disjoint transpositions, so that $c_j$ corresponds to the edge $j$. Let $A=\{i_1,\dots,i_k\}$ be some subset of $E$. The \emph{partial dual of $\mathcal{M}$ with respect to a subset $A\subseteq E$} is the map
\[\partial_A \mathcal M = \left\langle\sigma c_{i_1}\cdots c_{i_k},\alpha,c_{i_1}\cdots c_{i_k}\varphi\right\rangle.\]\end{definition}
That the partial dual $\partial_A\mathcal M$ is well defined follows directly from theorem \ref{theorem: partial} by an easy inductive argument.\vspace{\topsep}

The following lemma from \cite{chmutov09,chmutov14} lists some properties of the operation of partial duality, which are seen to be true directly from the definition.
\begin{lemma}\label{lemma: partial dual properties}Let $\mathcal M$ be a map, $E$ its set of edges, and $A$ some subset of $E$. Then
\begin{enumerate}[(a)]
  \item  $\partial_E \mathcal M = \mathcal M^*$
  \item  $\partial_A \partial_A \mathcal M= \mathcal M$.
  \item\label{lemma: partial dual properties: 3}  If $j \in E\setminus A$, then $\partial_j\partial_A \mathcal M =\partial_{A\cup\{j\}}\mathcal M$.
  \item\label{lemma: partial dual properties: 4}  If $A'$ is some other subset of $E$, then $\partial_{A'}\partial_A \mathcal M=\partial_{A\symmdiff A'} \mathcal M$.
  \item  Partial duality preserves the orientability, and the connected components of maps.
  \item\label{lemma: partial dual properties: 5}  If $S$ is the underlying surface of $\partial_A \mathcal M$, then $S$ is also the underlying surface of $\partial_{E\setminus A} \mathcal M$.
\end{enumerate}
\end{lemma}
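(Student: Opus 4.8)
The plan is to verify items (a)--(f) by direct manipulation of cartographic groups, taking them slightly out of order: item (d) will serve as a master identity from which (b) and (c) fall out as special cases, and (a) together with (d) then yields (f).

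First I would fix notation and record the elementary algebra of the transpositions $c_j$. Writing $\alpha=c_1\cdots c_n$ with $c_j$ the transposition attached to edge $j$, and setting $c_A=\prod_{j\in A}c_j$ (with $c_\emptyset=1$) for $A\subseteq E$, the fact that the $c_j$ are pairwise disjoint transpositions gives at once that they commute, that $c_j^2=1$, and hence that $c_A^2=1$, that $c_Ac_B=c_Bc_A=c_{A\symmdiff B}$ for all $A,B\subseteq E$, and in particular that $c_E=\alpha$. The second observation I would make is that $\partial_A$ does not change $\alpha$, so in $\partial_A\map$ the edge $j$ still corresponds to the same transposition $c_j$; consequently iterating partial duality only multiplies $\sigma$ on the right, and $\varphi$ on the left, by further products of these same $c_j$. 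With this in place (d) is immediate: from the definition $\partial_A\map=\langle\sigma c_A,\alpha,c_A\varphi\rangle$ one computes $\partial_{A'}\partial_A\map=\langle(\sigma c_A)c_{A'},\alpha,c_{A'}(c_A\varphi)\rangle=\langle\sigma c_{A\symmdiff A'},\alpha,c_{A\symmdiff A'}\varphi\rangle=\partial_{A\symmdiff A'}\map$. Then (b) is the case $A'=A$ (so $c_{A\symmdiff A}=c_\emptyset=1$), and (c) is the case $A'=\{j\}$ with $j\in E\setminus A$, where $A\symmdiff\{j\}=A\cup\{j\}$.

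Next I would dispatch (a) and (f). For (a), $\partial_E\map=\langle\sigma c_E,\alpha,c_E\varphi\rangle=\langle\sigma\alpha,\alpha,\alpha\varphi\rangle$; since $\sigma\alpha\varphi=1$ we have $\sigma\alpha=\varphi^{-1}$ and $\alpha\varphi=\sigma^{-1}$, and $\alpha=\alpha^{-1}$, so this triple is $(\varphi^{-1},\alpha^{-1},\sigma^{-1})$, which is exactly the cartographic group of $\map^*$ recorded at the start of Section~\ref{section: partial duals}. For (f), $\partial_A\map$ has the same edge set $E$ as $\map$, so applying (a) to $\partial_A\map$ and then (d) gives $(\partial_A\map)^*=\partial_E(\partial_A\map)=\partial_{A\symmdiff E}\map=\partial_{E\setminus A}\map$, using $A\subseteq E$; and since a map and its geometric dual are cellularly embedded on one and the same surface, the underlying surface of $\partial_A\map$ is also the underlying surface of $\partial_{E\setminus A}\map$.

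Finally (e). Orientability needs no argument: the rotation-system model of Section~\ref{section: partial duals} only describes orientable maps and $\partial_A$ stays inside it. For the claim about connected components I would first extend the dictionary of that section to possibly disconnected maps by declaring each connected component to be a transitive constituent of $\langle\sigma,\alpha,\varphi\rangle$ and letting $\partial_j$ act componentwise; then, since for $j$ in a component $C$ the transposition $c_j$ permutes only half-edges lying in $C$, replacing $\sigma,\varphi$ by $\sigma c_j,c_j\varphi$ can neither merge nor split components, and transitivity on $C$ itself is preserved by Theorem~\ref{theorem: partial}, with the general subset case following by the inductive argument noted after the definition of $\partial_A\map$. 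I expect this to be the only item needing genuine care --- not because it is hard, but because Section~\ref{section: partial duals} sets things up only for connected maps, so one must say explicitly what $\partial_A$ does to a disconnected map and check that a transposition supported on a single component leaves the orbit partition unchanged. Everything else is bookkeeping with the identities $c_A^2=1$, $c_Ac_B=c_{A\symmdiff B}$, $c_E=\alpha$ and $\sigma\alpha\varphi=1$.
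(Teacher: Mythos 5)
Your proof is correct and is essentially the argument the paper has in mind: the paper offers no proof at all, stating only that these properties ``are seen to be true directly from the definition'' (citing Chmutov and Chmutov--Vignes-Tourneret), and your computation with the pairwise disjoint, commuting transpositions $c_j$ --- giving $c_Ac_{A'}=c_{A\symmdiff A'}$, hence (d) as the master identity with (a), (b), (c), (f) as corollaries --- is exactly that direct verification written out. Your observation that only item (e) needs genuine care (extending the cartographic-group dictionary to disconnected maps and invoking Theorem \ref{theorem: partial} componentwise) is accurate and is a point the paper glosses over entirely.
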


\subsection{Lagrangian matroids of partial duals}

It is not difficult to see that if $\mathcal B$ is the collection of bases of a map $\map$, then $\mathcal B^*$ will be the collection of bases of its dual map $\map^*$. We can also write $\mathcal B^*$ as the symmetric difference
\[\mathcal B^*=\mathcal B\symmdiff (E\cup E^*)=\{B^*\mid B\in\mathcal B\}.\]
Clearly, if $A$ is an admissible  $n$-subset of $E\cup E^*$, then the collection
\[\mathcal B\symmdiff (A\cup A^*)=\{B\symmdiff (A\cup A^*)\mid B\in\mathcal B\}\]
forms the collection of bases of a Lagrangian matroid. This Lagrangian matroid has a clear-cut geometrical interpretation: it is the Lagrangian matroid of the partial dual $\partial_{E\cap A}\map$.

\begin{lemma}\label{lemma: partial}Let $\mathcal M$ be a map, $E$ its set of edges and $A$ some subset of $E$. Then
\[\Delta (\partial_A \mathcal M)=\Delta (\mathcal M) \symmdiff (A\cup A^*).\]
\end{lemma}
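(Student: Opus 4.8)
The plan is to reduce the general statement to the single-edge case, $\Delta(\partial_j\mathcal M)=\Delta(\mathcal M)\symmdiff(\{j\}\cup\{j^*\})$, and then iterate. The reduction is purely formal: by Lemma \ref{lemma: partial dual properties}\ref{lemma: partial dual properties: 3}, writing $A=\{i_1,\dots,i_k\}$ we have $\partial_A\mathcal M=\partial_{i_k}\partial_{i_{k-1}}\cdots\partial_{i_1}\mathcal M$, so if the single-edge identity holds, then applying it $k$ times gives
\[
\Delta(\partial_A\mathcal M)=\Delta(\mathcal M)\symmdiff(\{i_1,i_1^*\})\symmdiff(\{i_2,i_2^*\})\symmdiff\cdots\symmdiff(\{i_k,i_k^*\})
=\Delta(\mathcal M)\symmdiff(A\cup A^*),
\]
the last equality because the $i_j$ are distinct, so the symmetric differences of the two-element sets $\{i_j,i_j^*\}$ simply accumulate into $A\cup A^*$. (One should check that $\partial_{i_\ell}$ applied to $\partial_{i_{\ell-1}}\cdots\partial_{i_1}\mathcal M$ really is dualisation with respect to an edge not yet used, which is immediate since the edge-set is preserved by partial duality and the $i_j$ are distinct.)

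So the heart of the matter is the single-edge case. Here I would argue combinatorially at the level of bases, using the definition of a base of a map: $B\subseteq E\cup E^*$ admissible is a base of $\mathcal M$ iff $S\setminus B$ is connected. I want to show $B'$ is a base of $\partial_j\mathcal M$ iff $B'\symmdiff\{j,j^*\}$ is a base of $\mathcal M$. The key geometric observation is the explicit picture of $\partial_j\mathcal M$ recorded after Theorem \ref{theorem: partial}: when $j$ is not a loop, $\partial_j\mathcal M$ is obtained by contracting $j$ and attaching a loop with the same half-edge labels; when $j$ is a loop, $j$ is deleted and its base vertex expanded. In either case the complement $S\setminus B'$ in $\partial_j\mathcal M$, for a subset $B'$ containing $j$ (resp.\ not containing $j$), is homeomorphic to the complement $S\setminus(B'\symmdiff\{j,j^*\})$ in $\mathcal M$: dualising edge $j$ swaps the roles of "$j$ is in the subset" and "$j^*$ is in the subset" while leaving the ambient cut surface unchanged up to homeomorphism, because the partial-dual operation is a local modification in a neighbourhood of $j\cup j^*$ that interchanges $j$ and $j^*$. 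Hence connectivity of one complement is equivalent to connectivity of the other, which is exactly $\Delta(\partial_j\mathcal M)=\Delta(\mathcal M)\symmdiff\{j,j^*\}$ on the level of bases; that the resulting collection is the collection of bases of a Lagrangian matroid is then automatic, either by Theorem \ref{theorem:maps} applied to $\partial_j\mathcal M$ or by the remark preceding the lemma that $\Delta(\mathcal M)\symmdiff(A\cup A^*)$ is always a Lagrangian matroid.

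The main obstacle I anticipate is making the homeomorphism $S\setminus B'\cong S\setminus(B'\symmdiff\{j,j^*\})$ genuinely precise, including the loop/non-loop case split and the subtlety that $j$ and $j^*$ intersect exactly once, so that cutting along $j$ versus cutting along $j^*$ produces the "same" surface near that intersection point. An alternative that sidesteps the topology is to run the argument through the representation instead: by Theorem \ref{theorem:maps}, $\Delta(\mathcal M)$ is represented by the incidence matrix $M$ whose rows are $\iota(c)$ for cycles $c\in H_1(S\setminus\{V\cup V^*\})$, and one can check that the representation of $\partial_j\mathcal M$ is obtained from that of $\mathcal M$ by interchanging the columns indexed by $j$ and $j^*$ — which is precisely the content of section \ref{section: representation of partial duals}. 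Interchanging columns $j$ and $j^*$ of a representing matrix sends each base $B$ to $B\symmdiff\{j,j^*\}$, giving the identity at once. I would likely present the combinatorial proof as the main line and mention the representation-theoretic one as the conceptual explanation, since the latter is developed in the next section anyway.
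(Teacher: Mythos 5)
Your reduction to the single-edge case via Lemma \ref{lemma: partial dual properties}\eqref{lemma: partial dual properties: 3} is exactly what the paper does, and the bookkeeping with iterated symmetric differences of the distinct pairs $\{i_j,i_j^*\}$ is correct. The problem is that everything after that is the actual content of the lemma, and your justification for it is an assertion rather than an argument. You claim that for admissible $B'$ the complement of $B'$ in the surface of $\partial_j\map$ is homeomorphic to the complement of $B'\symmdiff\{j,j^*\}$ in the surface of $\map$ because partial duality is ``a local modification in a neighbourhood of $j\cup j^*$.'' This glosses over the fact that the two complements live on \emph{different} surfaces in general (the genus can change under $\partial_j$), and that the coedge $j^*$ of $\partial_j\map$ is a curve on the new surface, not on the old one, so there is no single ambient surface in which the two cuts can be compared locally. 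You flag this yourself as ``the main obstacle,'' and it is: the paper's proof consists precisely of discharging it, by a case analysis on whether $j$ is a contractible segment or a non-contractible loop, and within each case on whether passing to $\partial_j\map$ adds or removes a handle; in each case one checks directly that $B\symmdiff\{j,j^*\}$ does not disconnect the new surface. (The paper also economises by proving only the inclusion $\dtroid\map\symmdiff\{j,j^*\}\subseteq\dtroid{\partial_j\map}$ and then obtaining the reverse inclusion formally from $\partial_j\partial_j\map=\map$ --- a trick you could use to halve your work once the forward direction is established.)

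Your proposed fallback via representations does not rescue this: the statement that a representation of $\dtroid{\partial_j\map}$ is obtained from one of $\dtroid\map$ by swapping columns $j$ and $j^*$ is proved in section \ref{section: representation of partial duals} \emph{using} the present lemma (its proof begins ``If $B$ is a base of $\dtroid\map$, then the corresponding base in $\partial_1\map$ is $B\symmdiff\{1,1^*\}$''), so invoking it here is circular. To make that route legitimate you would have to rerun the incidence-vector construction of Theorem \ref{theorem:maps} on the surface of $\partial_j\map$ from scratch and show directly that it yields the column-swapped matrix, which again forces you to confront how the surface and the coedges change under partial duality --- the same topological work the paper's case analysis performs.
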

\begin{proof}It is sufficient to show the lemma for $A=\{j\}$ since the general result will then follow from lemma \ref{lemma: partial dual properties} \eqref{lemma: partial dual properties: 3}.

If $j$ is in no base, then it is a contractible loop in $\mathcal M$, and in $\partial_j \mathcal M$ it is an edge incident to a degree 1 vertex. In that case, the lemma follows easily.

So let $B$ be a base of $\mathcal M$ with $j\in B$. Depending on if $j$ is a contractible segment, or a non-contractible loop in $\mathcal M$, in $\partial_j \mathcal M$ it will be a non-contractible loop, or a contractible segment, respectively.

Suppose first that $j$ is a contractible segment so that both vertices incident to it have degree at least 2. By our construction, $j$ is a loop in $\partial_j \map$. Therefore, the map $\map/j$ obtained from $\map$ by contracting $j$ is the same map as $(\partial_j \map)\setminus j$. The underlying surface of $\map/j$ is the surface of $\map$, hence $B\setminus j$ does not disconnect it. Therefore, $B\setminus j$ is a base of $(\partial_j \map)\setminus j$ as well.

Let us now adjoin the loop $j$ back to $(\partial_j \map)\setminus j$. If we were forced to add a handle, then $j^*\in \partial_j \map$ will not disconnect the underlying surface since it will split the new handle into two sleeves and leave the rest of the surface unaffected. Therefore, $(B\setminus j)\cup j^* = B\symmdiff \{j,j^*\}$ will be a base of $\partial_j \map$.

If a new handle was not needed, then $\partial_j \map$ and $(\partial_j \map)\setminus j$ are on the same surface. Adjoining $j$ to $(\partial_j \map)\setminus j$ will clearly split some face into two new faces, hence $j^*\in\partial_j \map$ must be a contractible segment since its endpoints are in the two faces with $j$ as a common boundary. Therefore, $B\symmdiff \{j,j^*\}$ is a base of $\partial_j \map$.\vspace{\topsep}

Now suppose that $j$ is a loop. Since $j\in B$, it cannot be contractible. If $\map$ and $\partial_j \map$ are on the same surface, then, topologically, $j\in \map$ and $j^*\in\partial_j \map$ are the same loop. Therefore, $B\symmdiff \{j,j^*\}$ must be a base of $\partial_j \map$.
Otherwise, by removing a handle, Euler's formula implies that $\partial_j \map$ gained an additional face. By construction, $j\in\partial_j \map$ must be on the boundary of the additional face, and at least one other face since other edges in $\map$ do not contribute to the partial dual. Therefore, $j^*\in\partial_j \map$ is a contractible segment, and hence $B\symmdiff \{j,j^*\}$ is a base for $\partial_j \map$.

So far we have shown that $\dtroid\map\symmdiff\{j,j^*\} \subseteq \dtroid{\partial_j \map}$. The other inclusion is obtained by noting that if $B\in\Delta(\partial_j \map)$, then
\[B\symmdiff \{j,j^*\} \in \Delta(\partial_j \map)\symmdiff \{j,j^*\}.\]
However, by using the just proven inclusion we have
\[B\symmdiff\{j,j^*\} \in \Delta (\partial_j\partial_j \map)=\Delta (\map).\]
Moreover, since $B=(B\symmdiff \{j,j^*\})\symmdiff \{j,j^*\}$, we must have $B\in \Delta(\map)\symmdiff\{j,j^*\}$.
\end{proof}


\begin{theorem}\label{thm:correspondence}The set of bases of $\map$, and hence the set of bases of $\dtroid\map$ is in a 1-1 correspondence with the partial duals of $\map$ which have exactly one face. The correspondence is given by $B\mapsto\partial_{E\setminus B}\map$.\end{theorem}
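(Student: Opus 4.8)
The plan is to translate ``having exactly one face'' into a membership condition for the Lagrangian matroid and then invoke Lemma \ref{lemma: partial}. The observation that makes this work is the following: a map $\mathcal N$ on a surface $S$ with $n\geq 1$ edges has exactly one face if and only if the admissible $n$-subset consisting of all \emph{unstarred} elements, which I will denote simply by $[n]$, is a base of $\mathcal N$, i.e.\ $[n]\in\dtroid{\mathcal N}$. Indeed, the faces of $\mathcal N$ are the connected components of $S$ with the $1$-skeleton of $\mathcal N$ removed; as $\mathcal N$ is connected it has no isolated vertices, so this is precisely $S$ with the (closed) edges of $\mathcal N$ removed. Note that the coedges of $\mathcal N$ are \emph{not} removed when one deletes $[n]$, but they are arcs lying inside the faces and so do not affect connectivity; hence $S\setminus[n]$ is connected exactly when $\mathcal N$ has a single face. (The case $n=0$ is trivial.)

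I would then apply this to $\mathcal N=\partial_A\map$ for an arbitrary $A\subseteq E$. Partial duality leaves the involution $\alpha=c_1\cdots c_n$ unchanged, so the edge-set of $\partial_A\map$ is again indexed by $[n]$, and the previous observation combined with Lemma \ref{lemma: partial} gives
\begin{align*}
\partial_A\map\ \text{has exactly one face}
&\iff [n]\in\dtroid{\partial_A\map}=\dtroid{\map}\symmdiff(A\cup A^*)\\
&\iff [n]\symmdiff(A\cup A^*)\in\dtroid{\map}.
\end{align*}
A direct computation of the symmetric difference shows that $[n]\symmdiff(A\cup A^*)=([n]\setminus A)\cup A^*$, since an index $j\notin A$ contributes the element $j$ while an index $j\in A$ contributes the element $j^*$. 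Writing $B_A:=([n]\setminus A)\cup A^*$, we have shown that $\partial_A\map$ has exactly one face if and only if $B_A$ is a base of $\map$.

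To finish, observe that $A\mapsto B_A$ is a bijection from the subsets of $E$ onto the collection $J_n$ of all admissible $n$-subsets of $[n]\cup[n]^*$; its inverse sends an admissible $n$-subset $B$ to $E\setminus B:=\{\,j\in E : j\notin B\,\}=\{\,j : j^*\in B\,\}$, and one checks at once that $B_{E\setminus B}=B$ and $E\setminus B_A=A$. Restricting this bijection to those $A$ for which $B_A\in\dtroid{\map}$ yields a bijection between the bases of $\map$ and the set $\{A\subseteq E : \partial_A\map\ \text{has exactly one face}\}$; composing with the injection $A\mapsto\partial_A\map$ (injective because $\partial_A\map$ carries the triple $(\sigma c_A,\alpha,c_A\varphi)$, and $c_A=\sigma^{-1}(\sigma c_A)$ recovers $A$ from its support) gives the claimed correspondence $B\mapsto\partial_{E\setminus B}\map$. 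Since, by construction, the bases of $\dtroid{\map}$ are exactly the bases of $\map$, this is simultaneously a 1--1 correspondence with the bases of $\dtroid{\map}$.

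The conceptual crux is the first step --- recognising that ``$\map$ has exactly one face'' means the same as ``$[n]$ is a base of $\map$'' --- after which Lemma \ref{lemma: partial} reduces the theorem to the transparent bijection $A\leftrightarrow B_A$ together with a one-line symmetric-difference calculation. The only point requiring genuine care is to verify, against the definition of a base of a map, that deleting $[n]$ from $S$ removes the edges but not the coedges and that this does not change the number of connected components; everything after that is routine bookkeeping.
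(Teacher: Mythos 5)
Your proof is correct and follows essentially the same route as the paper's: both hinge on the translation ``exactly one face $\iff E$ is a base'' together with Lemma \ref{lemma: partial}. Your version is in fact a cleaner, unified presentation — the paper proves the forward direction by an edge-by-edge induction and only uses the symmetric-difference computation $E = A \symmdiff ((E\setminus A)\cup(E\setminus A)^*)$ for surjectivity, whereas you run the single equivalence $[n]\in\dtroid{\partial_A\map}\iff B_A\in\dtroid\map$ in both directions at once and make the bijection $A\leftrightarrow B_A$ explicit.
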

\begin{rmrk}Note that this correspondence is \emph{not} considered up to isomorphism, i.e.\ if $A_1\neq A_2$ we consider $\partial_{A_1}\map$ and $\partial_{A_2}\map$ to be distinct even if they happen to be isomorphic as maps.
\end{rmrk}
\begin{proof}First let us show that for a base $B\subseteq E$ of $\map$, the partial dual $\partial_{E\setminus B}\map$ has precisely one face.

If $B=E$ then $\map$ has exactly one face and $\partial_{E\setminus B}\map=\partial_\emptyset\map=\map$.
Let us assume that $B=\{1,2,\dots,n-1,n^*\}$. Then $B$ corresponds to a base $\{1,2,\dots,n-1, n^*\}$ of $\dtroid\map$, also denoted by $B$. Furthermore, $B\symmdiff \{n,n^*\}$ is a base of $\dtroid{\partial_n\map}$ and hence $\{1,2,\dots,n\}$ is a base of $\partial_n\map$. Therefore, $\partial_n\map=\partial_{E\setminus B}\map$ has precisely one face. The general case $B\subset E$ now follows by dualising $B$ with respect to $\{j,j^*\}$ for all $j\in E\setminus B$ and possibly relabelling.

Therefore, the map $B\mapsto\partial_{E\setminus B}\map$ is well defined. To see that it is a bijection, we only have to show that it is surjective. We have to show that if the partial dual $\partial_{E\setminus A}\map$ has exactly one face, then $A$ is a base of $\map$. Hence, if $\partial_{E\setminus A}\map$ has exactly one face, then $E$ is its base. That $A$ is a base of $\map$ now follows from lemma \ref{lemma: partial} with $E\setminus A$ in place of $A$ and the equality $E= A\symmdiff ((E\setminus A)\cup(E\setminus A)^*)$.  
\end{proof}

\section{Representations of partial duals and the action of $BC_n$}\label{section: representation of partial duals}

It should be of no surprise that the representations of the Lagrangian matroid of $\partial_j\map$ are closely related to the representations of the Lagrangian matroid of $\map$. In fact, we have the following

\begin{lemma}Let $\mathcal M$ be a map, and $(X|Y)$ a representation of $\dtroid{\mathcal M}$. Then a representation of $\dtroid {\partial_j \map}$ is obtained from $(X|Y)$ by permuting the columns $j$ and~$j^*$.\end{lemma}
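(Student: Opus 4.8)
The statement says that interchanging the columns indexed by $j$ and $j^*$ in a representation $(X|Y)$ of $\dtroid\map$ yields a representation of $\dtroid{\partial_j\map}$. By Lemma \ref{lemma: partial} we already know that $\dtroid{\partial_j\map}=\dtroid\map\symmdiff\{j,j^*\}$, so the combinatorial statement about bases is settled; what remains is to check that the column-swapped matrix is a legitimate representation, i.e.\ that it still represents a Lagrangian (maximal isotropic) subspace of the orthogonal space $\orth$, and that its collection of bases is exactly $\dtroid\map\symmdiff\{j,j^*\}$. So the proof naturally splits into two observations: (i) swapping columns $j$ and $j^*$ is a legal move that preserves orthogonal isotropy, and (ii) it implements $\symmdiff\{j,j^*\}$ on the bases.

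\textbf{Step 1: bases.} Let $(X|Y)$ represent $\dtroid\map$ and let $(X'|Y')$ be obtained by swapping the column indexed by $j$ with the column indexed by $j^*$. For an admissible $n$-subset $B\subseteq[n]\cup[n]^*$, the $n\times n$ submatrix of $(X'|Y')$ on columns $B$ is, up to a column permutation (which only changes the sign of the determinant), the submatrix of $(X|Y)$ on columns $B\symmdiff\{j,j^*\}$: indeed if $j\in B$ then in $(X'|Y')$ the column sitting in position $j$ is the old position-$j^*$ column, so picking position $j$ in $(X'|Y')$ is the same as picking position $j^*$ in $(X|Y)$, and symmetrically; the coordinates $i\neq j,j^*$ are untouched. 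Hence $B$ is a base of $(X'|Y')$ iff $B\symmdiff\{j,j^*\}$ is a base of $(X|Y)$, i.e.\ the bases of $(X'|Y')$ are precisely $\dtroid\map\symmdiff\{j,j^*\}=\dtroid{\partial_j\map}$ by Lemma \ref{lemma: partial}.

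\textbf{Step 2: isotropy.} It remains to see $(X'|Y')$ represents a Lagrangian subspace, not merely an arbitrary matrix with the right minors. The row space of $(X'|Y')$ has the same dimension $n$ as that of $(X|Y)$, since a column permutation does not change the rank. For isotropy, recall the orthogonal form on $\orth$ is defined by $\form{e_i}{e_j}=0$ for $j\neq i^*$ and $\form{e_i}{e_{i^*}}=\form{e_{i^*}}{e_i}=1$; this form is symmetric under the involution $*$ swapping $e_i\leftrightarrow e_{i^*}$ simultaneously for $i=j$ only (the coordinates $i\neq j$ being fixed). Thus swapping the $j$ and $j^*$ basis vectors is an isometry of $(\orth,\form\cdot\cdot)$, so it carries the Lagrangian subspace $\iota(\h)=\mathrm{rowspace}(X|Y)$ to another Lagrangian subspace, whose representation with respect to the ordered basis $E\cup E^*$ is exactly $(X'|Y')$. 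Equivalently, in terms of the criterion that $(X|Y)$ is orthogonally isotropic iff $XY^t$ is anti-symmetric, one checks directly that the swap $j\leftrightarrow j^*$ simultaneously transposes the $j$-th column of $X$ with the $j$-th column of $Y$ and leaves $X'Y'^t$ anti-symmetric; but the isometry argument makes this automatic.

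\textbf{Main obstacle.} The only subtlety, and the place where care is needed, is the bookkeeping in Step 1: one must be precise that "the column indexed by $j$" refers to a position in the matrix whose label changes under the swap, so that selecting a base $B$ in the new matrix genuinely corresponds to selecting $B\symmdiff\{j,j^*\}$ in the old one, and that this is consistent with the admissibility condition (if $j\in B$ then $j^*\notin B$, and after the swap it is $j^*$ that effectively lies in the selected set). Once this is set up cleanly, both halves are immediate, and the identification of the resulting base collection with $\dtroid{\partial_j\map}$ is precisely the content of Lemma \ref{lemma: partial}.
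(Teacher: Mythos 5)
Your proof is correct and follows essentially the same route as the paper's: both arguments reduce to the observation that selecting the admissible column set $B$ in the column-swapped matrix is the same as selecting $B\symmdiff\{j,j^*\}$ in $(X|Y)$, and then invoke Lemma \ref{lemma: partial} to identify the resulting base collection with $\dtroid{\partial_j\map}$. Your Step 2 (that the swap is an isometry of the orthogonal form, so the new matrix still represents a Lagrangian subspace) is a point the paper's proof leaves implicit, and it is a welcome addition rather than a divergence.
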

\begin{proof}We may assume that $j=1$ by relabelling the edges of the map.
If $B$ is a base of $\dtroid\map$, then the corresponding base in $\partial_1 \mathcal M$ is $B\symmdiff\{1,1^*\}$, and hence all the linear independences between the column vectors corresponding to $B\setminus\{1,1^*\}$ have to be preserved in a representation of $\dtroid {\partial_1 \mathcal M}$. Therefore, we want to show that a representation of $\dtroid {\partial_1 \mathcal M}$ is given by $(P|Q)$ where $P$ and $Q$ coincide with $X$ and $Y$ on all but the first columns, respectively, and such that the first column of $P$ is the first column of $Y$, and the first column of $Q$ is the first column of $X$. To that effect, for $R\in\{X, Y, P, Q\}$ let $R'$ denote the matrix $R$ with its first column removed.

By attaching the column $1^*$ to $P'$ and the column $1$ to $Q'$ as the first columns of $P$ and $Q$, respectively, the admissible column vectors $V=\{1,v_2,\dots,v_n\}$ in $(X|Y)$ are the admissible column vectors $W=\{1^*,v_2,\dots,v_n\}$ in $(P|Q)$. Therefore, if $V$ corresponds to a base $B_V$ of $\dtroid\map$, then $W$ corresponds to a base $B_W$ of $\dtroid{\partial_1 \mathcal M}$ with $B_W=B_V\symmdiff\{1,1^*\}$. The same argument holds if we replace $1$ with $1^*$, and $1^*$ with $1$ in $V$ and $W$, respectively.\end{proof}

\begin{theorem}\label{theorem:representations}Let $\mathcal M$ be a map, $E$ its set of edges, and $A\subseteq E$. If $(X|Y)$ is a representation of $\dtroid\map$, then a representation of $\dtroid {\partial_A\map}$ can be obtained from $(X|Y)$ by permuting the columns $j$ and $j^*$, for all $j\in A$.\end{theorem}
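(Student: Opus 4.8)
The plan is to prove this by induction on $|A|$, bootstrapping from the preceding lemma (which is precisely the case $|A|=1$) by means of part \eqref{lemma: partial dual properties: 3} of Lemma \ref{lemma: partial dual properties}. For the base cases: when $A=\emptyset$ one has $\partial_\emptyset\map=\map$ and no columns are permuted, and when $A=\{j\}$ the claim is the preceding lemma verbatim.

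For the inductive step I would assume the result for all subsets of $E$ of size $k$, take $A\subseteq E$ of size $k+1$, pick $j\in A$, and set $A'=A\setminus\{j\}$, so that $j\in E\setminus A'$ and $|A'|=k$. By Lemma \ref{lemma: partial dual properties}\eqref{lemma: partial dual properties: 3} we have $\partial_A\map=\partial_j\partial_{A'}\map$. The inductive hypothesis produces a representation $(X'|Y')$ of $\dtroid{\partial_{A'}\map}$ obtained from $(X|Y)$ by interchanging columns $i$ and $i^*$ for every $i\in A'$; applying the preceding lemma to the map $\partial_{A'}\map$ and its edge $j$ then yields a representation of $\dtroid{\partial_j\partial_{A'}\map}=\dtroid{\partial_A\map}$ obtained from $(X'|Y')$ by interchanging columns $j$ and $j^*$.

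Finally I would observe that, since $j\notin A'$, the transpositions $(i~i^*)$ for $i\in A'$ and the transposition $(j~j^*)$ act on pairwise disjoint pairs of column indices; hence they commute and their composition is exactly the column permutation swapping $i$ and $i^*$ for all $i\in A'\cup\{j\}=A$, which completes the induction.

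The argument is essentially routine, so I do not expect a serious obstacle; the only point that requires (minimal) care is the indexing. One must note that partial duality leaves the edge set $E$ unchanged, so at each stage the columns of the representation are still labelled by $[n]\cup[n]^*$ compatibly with the edges of the current partial dual, and therefore the single-edge lemma may legitimately be re-applied to $\partial_{A'}\map$ with the same edge $j$.
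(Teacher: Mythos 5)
Your induction on $|A|$, using Lemma \ref{lemma: partial dual properties}\eqref{lemma: partial dual properties: 3} to peel off one edge at a time and invoking the single-edge lemma at each step, is correct and is exactly the routine argument the paper intends (the paper states the theorem without proof, treating it as an immediate consequence of the preceding lemma). Your closing remark about the disjointness of the transpositions and the stability of the edge labelling under partial duality is the right point of care and is handled correctly.
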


\subsection{The action of the Coxeter group $BC_n$}\label{section:action}

There is an equivalent definition of Lagrangian matroids in the language of Coxeter groups; let $BC_n$ be the group of all permutations $w$ of the set $[n]\cup[n]^*$ such that $w(i^*)=w(i)^*$. This group is isomorphic to the group of symmetries of the $n$-cube $[-1,1]^n$, so that $i\in[n]\cup[n]^*$ is one of its facets, and $i$ and $i^*$ are opposite facets.

When $n=3$ the dual polytope of $[-1,1]^3$ is an octahedron and we can identify $[3]\cup[3]^*$ with its set of vertices so that $i$ and $i^*$ are the centres of opposite faces of $[-1,1]^3$. In general, the dual polytope of the $n$-cube $[-1,1]^n$ is called an $n$-\emph{hyperoctahedron}. Its symmetry group is $BC_n$, and for that reason $BC_n$ is sometimes called the \emph{hyperoctahedral group}.

It is well known that $BC_n$ is generated by the transpositions $(j~j^*)$ and the products $(j~k)(j^*~k^*)$, for all $j,k\in[n]\cup[n]^*$ with $j\notin \{k,k^*\}$.

\begin{rmrk}\label{remark:gen}Note that if $j\in[n]$ and $k\in[n]^*$, then \[(j~k)(j^*~k^*)=(j~k^*)(j^*~k)(j~j^*)(k~k^*).\] This equality will be useful when we will be considering the action of $BC_n$ on maps.
\end{rmrk}

The generators of $BC_n$ act on a Lagrangian matroid $\mathcal B$ by acting on its bases in the following way: for $A\in\mathcal B$ 
\begin{itemize}
	\item a generator $(j~j^*)$ acts on $A$ by interchanging $j \leftrightarrow j^*$,
	\item a generator $(j~k)(j^*~k^*)$ acts on $A$ by interchanging $j \leftrightarrow k$ and $j^*\leftrightarrow k^*$.
\end{itemize} 

We can reinterpret the symmetric exchange axiom as the action of $BC_n$ on the elements of $\mathcal B$ by transpositions $(j~j^*)$.

\begin{definition}\emph{Symmetric exchange axiom.}
For any $A,B\in\mathcal B$, and $j\in A\symmdiff B$, there exists $k\in B\symmdiff A$ such that $(j~j^*)(k~k^*)A\in\mathcal B$. In the case when $j=k$ we consider only $(j~j^*)A$.\end{definition}

Similarly, we can reinterpret lemma \ref{lemma: partial} as $BC_n$ acting on the bases by transpositions $(j~j^*)$, i.e.
\[\dtroid{\partial_j \map}=(j~j^*)\dtroid\map=\{(j~j^*)B\mid B\in\dtroid\map\},\]
and if $A=\{j_1,\dots,j_k\}\subseteq E$, then we have
\[\dtroid{\partial_A \map}=(j_1~j_1^*)\cdots(j_k~j_k^*)\dtroid\map=\{(j_1~j_1^*)\cdots(j_k~j_k^*)B\mid B\in\dtroid\map\}.\]

In fact, we can lift this action of $BC_n$ to an action on maps with $n$ edges. The action of a generator $(j~j^*)$ on $\map$ results with $\partial_j\map$ while the action of a generator $(j~k)(j^*~k^*)$, when $j$ and $k$ are both in $[n]$, amounts to relabelling the edges $j$ and $k$, together with their respective coedges. In terms of the cartography group $\pergrp$ of $\map$, if the edge $j$ corresponds to the transposition $c_j=(2j-1 ~ 2j)$ so that $\alpha=(1~2)(3~4)\cdots (2j-1 ~ 2j)\cdots (2n-1 ~ 2n)$ we have:
\[(j~j^*)\pergrp = \pergrpj,\]
\[(j~k)(j^*~k^*)\pergrp = \langle \sigma^{d_{jk}},\alpha^{d_{jk}},\varphi^{d_{jk}}\rangle,\]
where $g^{d_{jk}}$ denotes conjugation by $d_{jk}=(2j-1 ~ 2k-1)(2j~2k)$ for $g\in\{\sigma,\alpha,\varphi\}$.

If $j\in[n]$ and $k\in[n]^*$, then by remark \ref{remark:gen} we can write $(j~k)(j^*~k^*)$ as $(j~k^*)(j^*~k)(j~j^*)(k~k^*)$, with $k^*\in[n]$. Therefore, the action of $(j~k)(j^*~k^*)$ amounts to dualising $\map$ with respect to the edges $j$ and $k^*$ (since $k\in[n]^*$, $k^*$ is an edge of $\map$, not of $\map^*$), and subsequently interchanging the labels for $j$ and $k^*$. In terms of the cartography group of $\map$ we have
\[(j~k)(j^*~k^*)\pergrp = \langle (\sigma c_jc_{k^*})^{d_{jk^*}},\alpha^{d_{jk^*}},(c_j c_{k^*}\varphi)^{d_{jk^*}}\rangle.\]

The two actions clearly are compatible since for $w\in BC_n$ we have
\[\dtroid{w\map}=w\dtroid\map.\]

\subsection{The $C_n$ root system, Lagrangian matroid polytopes and partial duals}\label{section:polytopes}

Lagrangian matroids are closely related to the root system of type $C_n$, i.e.\ the root system of the hyperoctahedral group $BC_n$, which is defined as follows.

Let $\{\epsilon_1,\dots,\epsilon_n\}$ denote the standard orthonormal basis for $\mathbb R^n$ and set $\epsilon_{i^*}=-\epsilon_i$ for all $i^*\in[n]^*$. The \emph{root system of type} $C_n$ is given by the vectors $\pm2\epsilon_i$ and $\pm\epsilon_i\pm\epsilon_j$ for all $1\leq i<j\leq n$.

For an admissible $n$-set $A=\{i_1,\dots,i_n\}\subset[n]\cup[n]^*$ let $v_A\in\mathbb R^n$ be the point
\[v_A=\epsilon_{i_1}+\cdots+\epsilon_{i_n}.\]
If $\mathcal B$ is a collection of admissible $n$-sets in $[n]\cup[n]^*$, let $\mathbf P$ denote the convex hull of the points $v_A$, for all $A\in\mathcal B$.

\begin{theorem}[Gelfand-Serganova]The collection $\mathcal B$ is the collection of bases of a Lagrangian matroid if, and only if all edges of $\mathbf P$ are parallel to the roots in the root system of type $C_n$.
\end{theorem}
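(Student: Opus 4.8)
My plan is to prove the two implications separately, exploiting the interplay between the symmetric exchange axiom (Definition \ref{def:symm}) and the combinatorics of edges of the polytope $\mathbf P$. The key preliminary observation is that every vertex $v_A$ of $\mathbf P$ is a $\pm 1$ vector in $\{-1,1\}^n$ (writing $v_A$ in coordinates: the $i$-th coordinate is $+1$ if $i\in A$ and $-1$ if $i^*\in A$), so all vertices lie on the sphere of radius $\sqrt n$. Consequently, for two admissible sets $A$ and $B$, the chord $v_A - v_B$ is a $C_n$-root precisely in two cases: either $v_A$ and $v_B$ differ in exactly one coordinate $i$ (giving $\pm 2\epsilon_i$, which happens iff $B = A\symmdiff\{i,i^*\}$), or they differ in exactly two coordinates $i,j$ (giving $\pm\epsilon_i\pm\epsilon_j$, which happens iff $B = A\symmdiff\{i,i^*,j,j^*\}$). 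So ``$v_A v_B$ parallel to a root'' translates into ``$A$ and $B$ differ by a single symmetric exchange of one or two index pairs.'' I would state this translation as a short lemma at the start.

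For the forward direction, assume $\mathcal B$ is the collection of bases of a Lagrangian matroid and let $v_A v_B$ be an edge of $\mathbf P$; I must show $A,B$ differ in at most two coordinates. Suppose not. Since $v_A v_B$ is an edge, there is a linear functional $\ell$ maximised on $\mathbf P$ exactly along the segment $[v_A, v_B]$. Pick $j \in A\symmdiff B$; by the symmetric exchange axiom there is $k \in B\symmdiff A$ with $C := A\symmdiff\{j,j^*,k,k^*\}\in\mathcal B$ (or $C := A\symmdiff\{j,j^*\}$ if $j=k$ is forced). I want to choose $j$ so that $v_C$ lies strictly inside the region $\{\ell \geq \ell(v_A)\}$ while not lying on the segment — contradicting that the segment is a face. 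The standard trick is to consider the midpoint $m = \tfrac12(v_A+v_B)$ and note that $v_A - v_B = \sum_{i \in D} \pm 2\epsilon_i$ where $D$ is the (size $\geq 3$) set of coordinates in which they differ; then for a suitable pair $i \neq i'$ in $D$, the point $v_A\symmdiff\{i,i^*,i',i'^*\}$ (if it is in $\mathcal B$) together with its partner under the exchange produces a point off the segment but in the maximal face — the exchange axiom is exactly what guarantees such a point exists. Making the index bookkeeping here precise is the main obstacle: one must verify that the exchanged basis genuinely violates the edge condition rather than landing back on the segment.

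For the converse, assume every edge of $\mathbf P$ is parallel to a $C_n$-root; I must verify the symmetric exchange axiom. Take $A, B\in\mathcal B$ and $j\in A\symmdiff B$. Consider the functional $\ell$ that is the linear form ``sum of coordinates indexed with the sign pattern agreeing with $A$'' tilted slightly so that among vertices of $\mathbf P$ it is maximised at $v_A$; more usefully, walk along the $1$-skeleton of $\mathbf P$ from $v_B$ toward $v_A$ along a monotone path for a generic functional with $v_A$ as the unique maximum. The first edge of this path out of $v_B$ is parallel to a root, so it leads to a vertex $v_{B'}$ with $B' = B\symmdiff\{i,i^*\}$ or $B\symmdiff\{i,i^*,k,k^*\}$, strictly closer to $v_A$ in Hamming distance; choosing the functional so that coordinate $j$ is corrected first forces $i = j$, which yields exactly $(j\ j^*)(k\ k^*)A\in\mathcal B$ with $k\in B\symmdiff A$. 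The subtlety is ensuring the functional can be chosen to ``fix coordinate $j$ first'' while keeping $v_A$ the unique maximum; this is a genericity/perturbation argument using that $j\in A\symmdiff B$ means $v_A$ and $v_B$ disagree in coordinate $j$. I expect the converse to be the easier of the two once the root-versus-Hamming-distance dictionary is in place, and I would cite \cite{borovik_gelfand_white} for the full details since this is a known theorem, presenting only the sketch above.
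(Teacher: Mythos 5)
The paper itself gives no proof of this theorem --- it is stated and immediately deferred to \cite[Chapter 3.3]{borovik_gelfand_white} --- so there is no internal argument to compare against; your translation lemma (an edge is parallel to a $C_n$-root if and only if its endpoints differ by a symmetric exchange in one or two index pairs) is correct and is indeed the right dictionary. However, both of your implications stop exactly where the real work happens, and in each case the missing step is a specific idea rather than mere bookkeeping. For the forward direction: let $\ell$ be a functional whose maximum face is the alleged edge $[v_A,v_B]$ with $A,B$ differing in $m\ge 3$ coordinates, and for each differing coordinate $i$ set $\delta_i=\ell(v_{A\symmdiff\{i,i^*\}})-\ell(v_A)$, so that $\sum_i\delta_i=\ell(v_B)-\ell(v_A)=0$. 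The point you are missing is to choose $j$ with $|\delta_j|$ maximal and, swapping the roles of $A$ and $B$ if necessary, $\delta_j\ge 0$; then for \emph{whatever} $k$ the symmetric exchange axiom returns one gets $\ell(v_{A\symmdiff\{j,j^*,k,k^*\}})=\ell(v_A)+\delta_j+\delta_k\ge\ell(v_A)+|\delta_j|-|\delta_k|\ge\ell(v_A)$, while $A\symmdiff\{j,j^*,k,k^*\}$ differs from $A$ in at most two coordinates and from $B$ in at least one, so it is a vertex of $\mathbf P$ off the segment lying in the maximum face --- a contradiction. Without the maximality choice your ``suitable pair'' need not exist, since the exchange partner $k$ is handed to you by the axiom and could have $\delta_k$ very negative.

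For the converse your sketch has a genuine hole beyond the one you flag: you worry about forcing the first flipped coordinate to be $j$, but you never ensure that the \emph{second} flipped coordinate $k$ lies in $B\symmdiff A$, which the axiom requires. A generic functional with $v_A$ as unique maximum does not give this: the first increasing edge out of $v_B$ could flip $j$ together with a coordinate on which $A$ and $B$ agree. The standard fix is a two-stage restriction rather than a perturbation: first pass to the face $F$ of $\mathbf P$ on which $\ell_0=\sum\sigma_ix_i$ (the sum over the coordinates where $A$ and $B$ agree, taken with their common signs $\sigma_i$) attains its maximum; since $v_A$ realises the cube-wide maximum of $\ell_0$, every vertex of $\mathbf P$ in $F$ agrees with $A$ and $B$ on those coordinates, so every edge of $F$ flips only coordinates of $A\symmdiff B$. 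Then maximise the single-coordinate functional $\sigma_j^Bx_j$ on $F$: since $v_B\in F$ beats $v_A$, some edge of $F$ at $v_A$ strictly increases it; that edge is an edge of $\mathbf P$, hence flips $j$ and at most one further coordinate $k$, necessarily with $k\in B\symmdiff A$, yielding $A\symmdiff\{j,j^*,k,k^*\}\in\mathcal B$. (Note also that walking from $v_B$, as you propose, proves the exchange for $B$ rather than for $A$; this is harmless only because the axiom is quantified over all ordered pairs.) With these two repairs your outline becomes the standard proof; citing \cite{borovik_gelfand_white} for it, as the paper does, is of course also acceptable.
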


We omit the proof, however an interested reader may look it up in \cite[Chapter 3.3]{borovik_gelfand_white}.

Given a Lagrangian matroid $\mathcal B$, we say that $\mathbf P$, or $\mathbf P(\mathcal B)$ is its \emph{Lagrangian matroid polytope}. The hyperoctahedral group acts on $\mathbf P$ in the following way:
\begin{itemize}
	\item a generator $(j~j^*)$ acts on $\mathbf P$ by reflection with respect to the hyperplane orthogonal to the root $\pm2\epsilon_j$,
	\item a generator $(j~k)(j^*~k^*)$ acts on $\mathbf P$ by reflection with respect to the hyperplane orthogonal to the root $\epsilon_j-\epsilon_k$.
\end{itemize}

This action is compatible with the action of $BC_n$ on Lagrangian matroids (and hence with its action on maps) in the following sense: for $w\in BC_n$ if $\mathcal B$ is a Lagrangian matroid and $\mathbf P(\mathcal B)$ its Lagrangian matroid polytope, then \[w\mathbf P(\mathcal B)=\mathbf P(w\mathcal B).\]

In the following example we consider a map $\map$, compute its Lagrangian matroid and draw its Lagrangian matroid polytope. Then we consider the action of $BC_n$ by transpositions on all three objects.

\begin{example}Let $\map$ be a genus 0 map with 3 vertices, 3 edges and 2 faces as shown and labelled in figure \ref{fig:map}.

\begin{figure}[ht]
	\centering
		\includegraphics[trim=5cm 18.25cm 5cm 5cm]{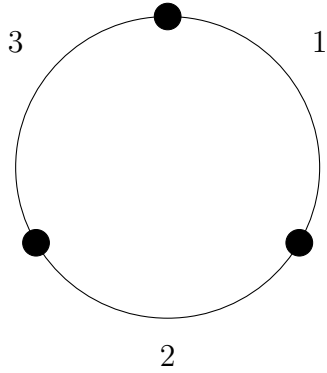}
	\caption{A genus 0 map with 3 vertices, 3 edges and 2 faces.}
	\label{fig:map}
\end{figure}

Since $\map$ is genus 0, then its Lagrangian matroid $\dtroid\map$ is a matroid, hence its bases are in a 1-1 correspondence with its spanning trees. Therefore,
\[\dtroid\map=\{123^*, 12^*3, 1^*23\}.\]
A representation of $\dtroid\map$ is given by the following matrix:
\[
\bordermatrix{
	 & 1 & 2 &  3 & 1^* & 2^* & 3^* \cr
   & 1 & 0 & -1 & 0 & 0 & 0\cr
   & 0 & 1 & -1 & 0 & 0 & 0\cr
   & 0 & 0 &  0 & 1 & 1 & 1}.
	\]

The Lagrangian matroid polytope $\mathbf P(\dtroid\map)$ is the convex hull of the points $(1,1,-1)$, $(1,-1,1)$ and $(-1,1,1)$ given in figure \ref{fig:poly}.

\begin{figure}[ht]
	\centering
		\includegraphics[trim=5cm 16.5cm 5cm 5cm]{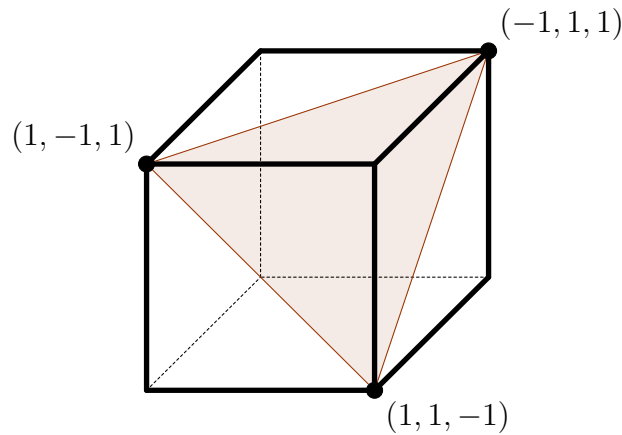}
	\caption{The Lagrangian matroid polytope $\mathbf P(\dtroid\map)$. The point $(\pm 1, \pm 1, \pm 1)$ is the intersection of the front/back, right/left and top/bottom faces of the cube $[-1,1]^3$, respectively.}
	\label{fig:poly}
\end{figure}

Let us consider the partial duals $\partial_1\map$, $\partial_2\map$ and $\partial_3\map$. Note that per lemma \ref{lemma: partial}\eqref{lemma: partial dual properties: 4} all other partial duals are the full duals of the maps $\map$ and $\partial_j\map$ for $j\in\{1,2,3\}$, hence we will not be drawing them.

The three partial duals $\partial_j\map$ for $j\in\{1,2,3\}$ are all genus 1 maps, as shown in figure \ref{fig:pduals}. In fact, they are all isomorphic as maps. Their Lagrangian matroids are
\begin{align*}
\dtroid{\partial_1\map}&=(1~1^*)\dtroid\map=\{1^*23^*, 1^*2^*3, 123\},\\
\dtroid{\partial_2\map}&=(2~2^*)\dtroid\map=\{12^*3^*, 123, 1^*2^*3\},\\
\dtroid{\partial_3\map}&=(3~3^*)\dtroid\map=\{123, 12^*3^*, 1^*23^*\}.
\end{align*}
The isomorphisms $(f_{ij},f_{ij}^*)$ between the Lagrangian matroids $\dtroid{\partial_i\map}$ and $\dtroid{\partial_j\map}$, written as permutations of $[n]$ and $[n]^*$, are given by $f_{ij}=(i~j)$ and $f^*_{ij}=(i^*~j^*)$, for $1\leq i<j\leq3$.

\begin{figure}[ht]
	\centering
		\includegraphics[scale=.75,trim=8cm 18cm 8cm 4cm]{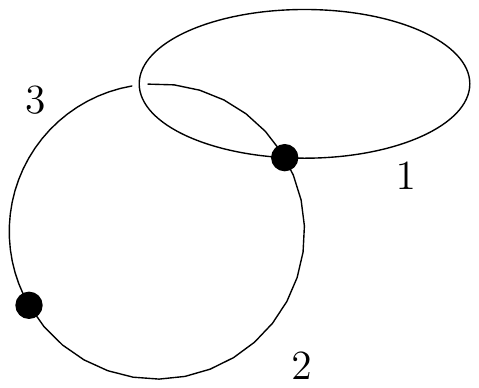}\includegraphics[scale=.75,trim=8cm 18cm 8cm 4cm]{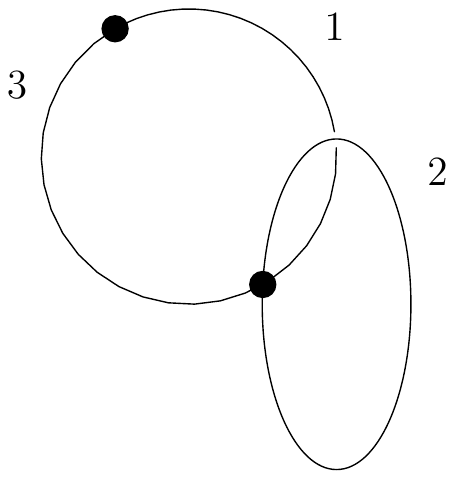}\includegraphics[scale=.75,trim=8cm 18cm 8cm 4cm]{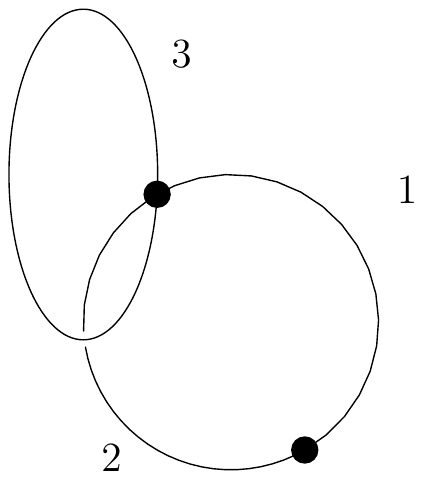}
	\caption{From left to right, the partial duals $\partial_1\map$, $\partial_2\map$ and $\partial_3\map$. All three maps are of genus 1.}
	\label{fig:pduals}
\end{figure}

The polytopes $\mathbf P(\dtroid{\partial_j\map})$ shown in figure \ref{fig:polypartial} are obtained from $\mathbf P(\dtroid\map)$ by reflecting it with respect to the hyperplanes orthogonal to the roots $\pm2\epsilon_j$, for $j\in\{1,2,3\}$.

\begin{figure}[ht]
	\centering
		\includegraphics[scale=.5,trim=6cm 14.5cm 8cm 6cm]{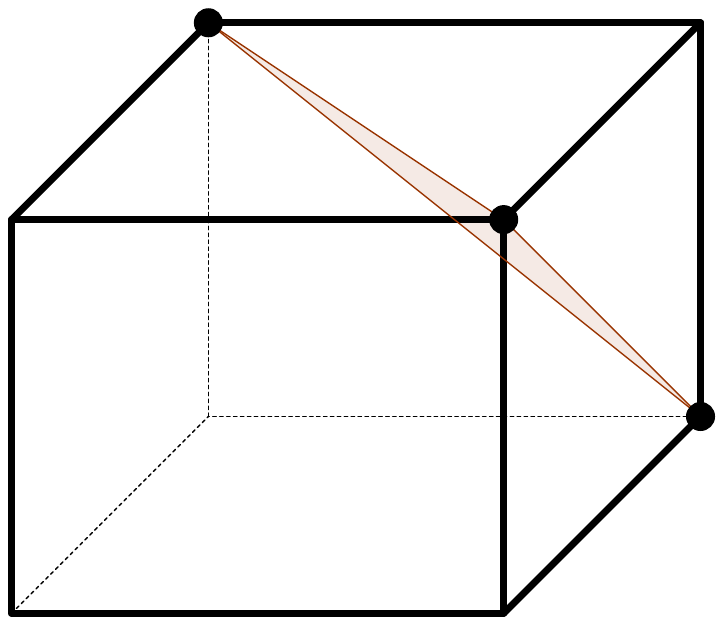}\includegraphics[scale=.5,trim=6cm 14.5cm 8cm 6cm]{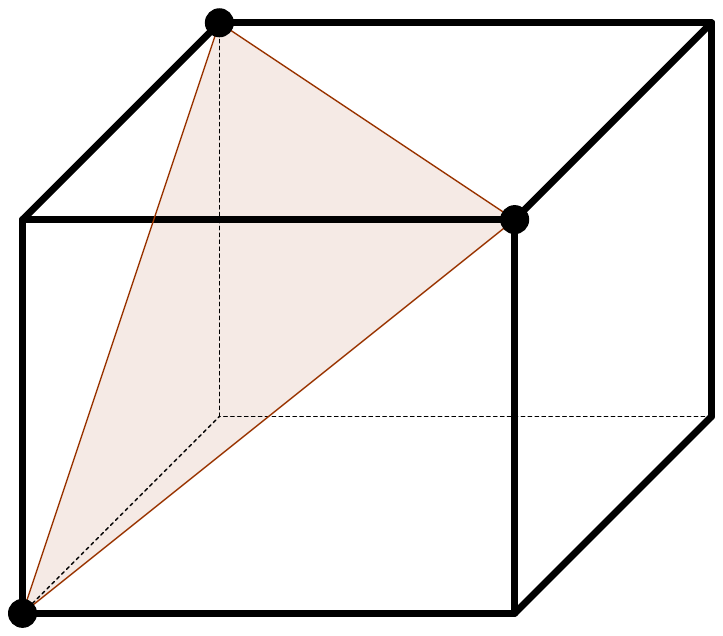}\includegraphics[scale=.5,trim=6cm 14.5cm 8cm 6cm]{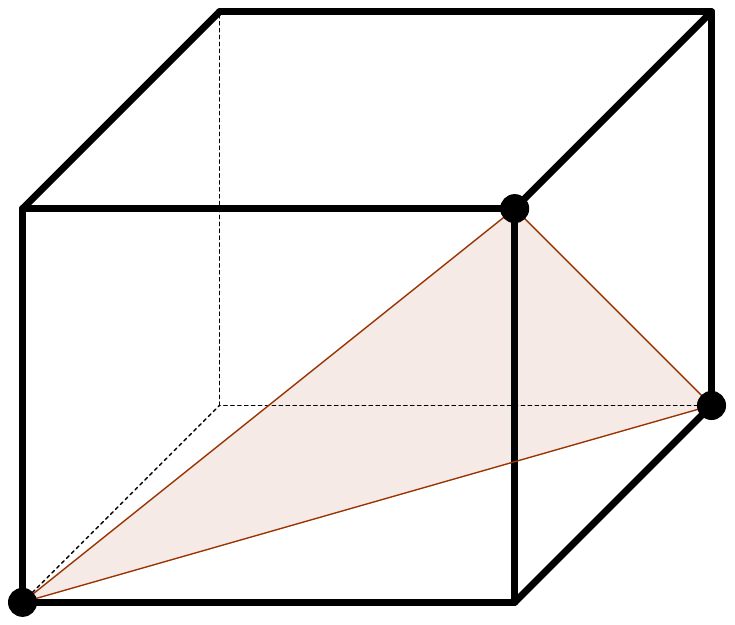}
	\caption{From left to right, the Lagrangian matroid polytopes $\mathbf P(\dtroid{\partial_1\map})$, $\mathbf P(\dtroid{\partial_2\map})$ and $\mathbf P(\dtroid{\partial_3\map})$.}
	\label{fig:polypartial}
\end{figure}

Note that the polytopes $\mathbf P(\dtroid{\partial_j\map})$ form three faces of a tetrahedron inscribed into $[-1,1]^3$. The missing face corresponds to $\mathbf P(\dtroid{\map^*})$. By duality, the four polytopes $\mathbf P(\dtroid{\map})$, $\mathbf P(\dtroid{\partial_{12}\map})$, $\mathbf P(\dtroid{\partial_{13}\map})$ and $\mathbf P(\dtroid{\partial_{23}\map})$ form the other inscribed tetrahedron, hence the set of Lagrangian matroid polytopes of all partial duals of $\map$ forms the stellated octahedron, as shown in figure \ref{fig:stellar}.

\begin{figure}[ht]
	\centering
		\includegraphics[scale=.65,trim={1.5cm 1.5cm 1cm .5cm}]{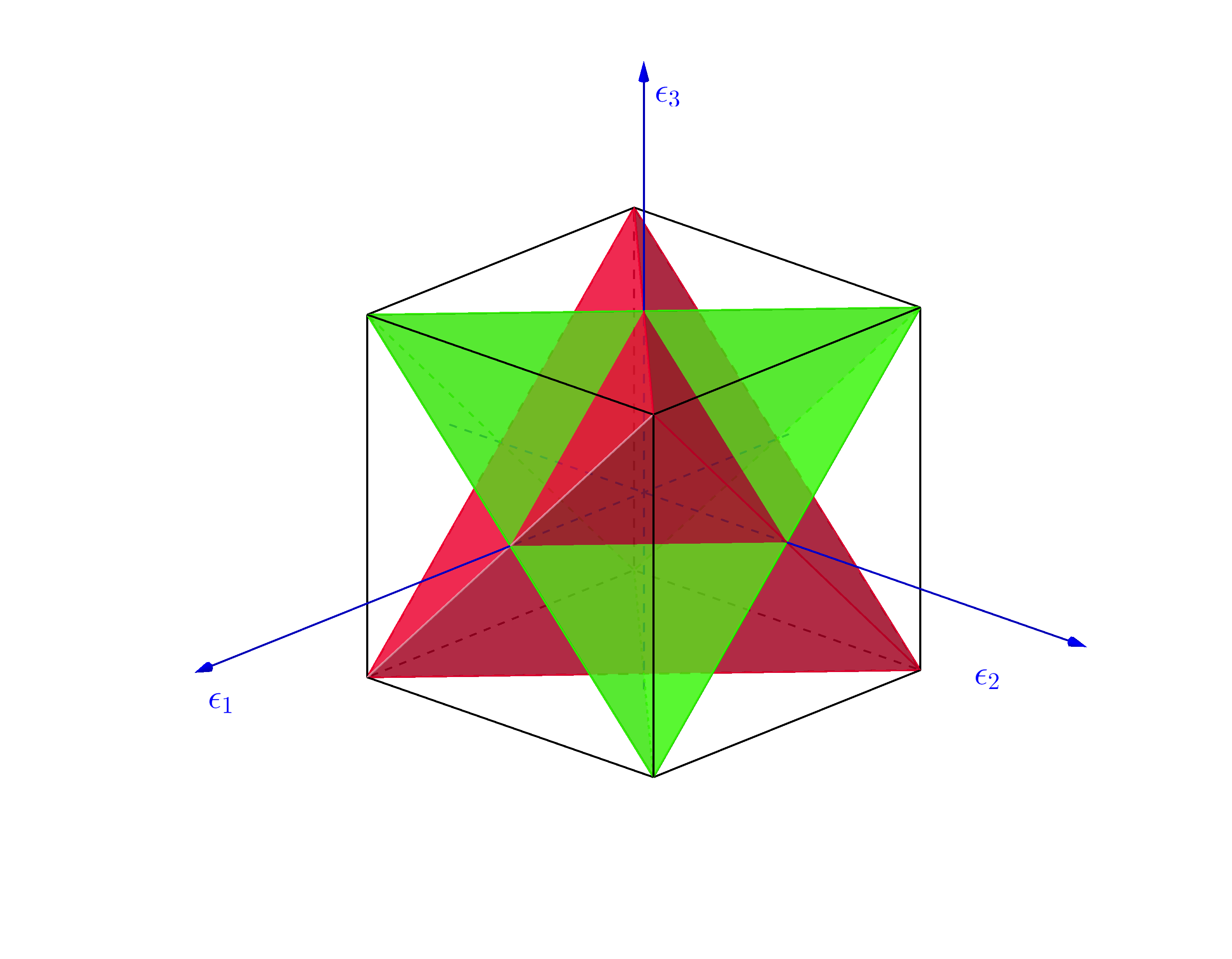}\includegraphics[scale=.65,trim={.75cm 1.75cm 2.5cm .25cm}]{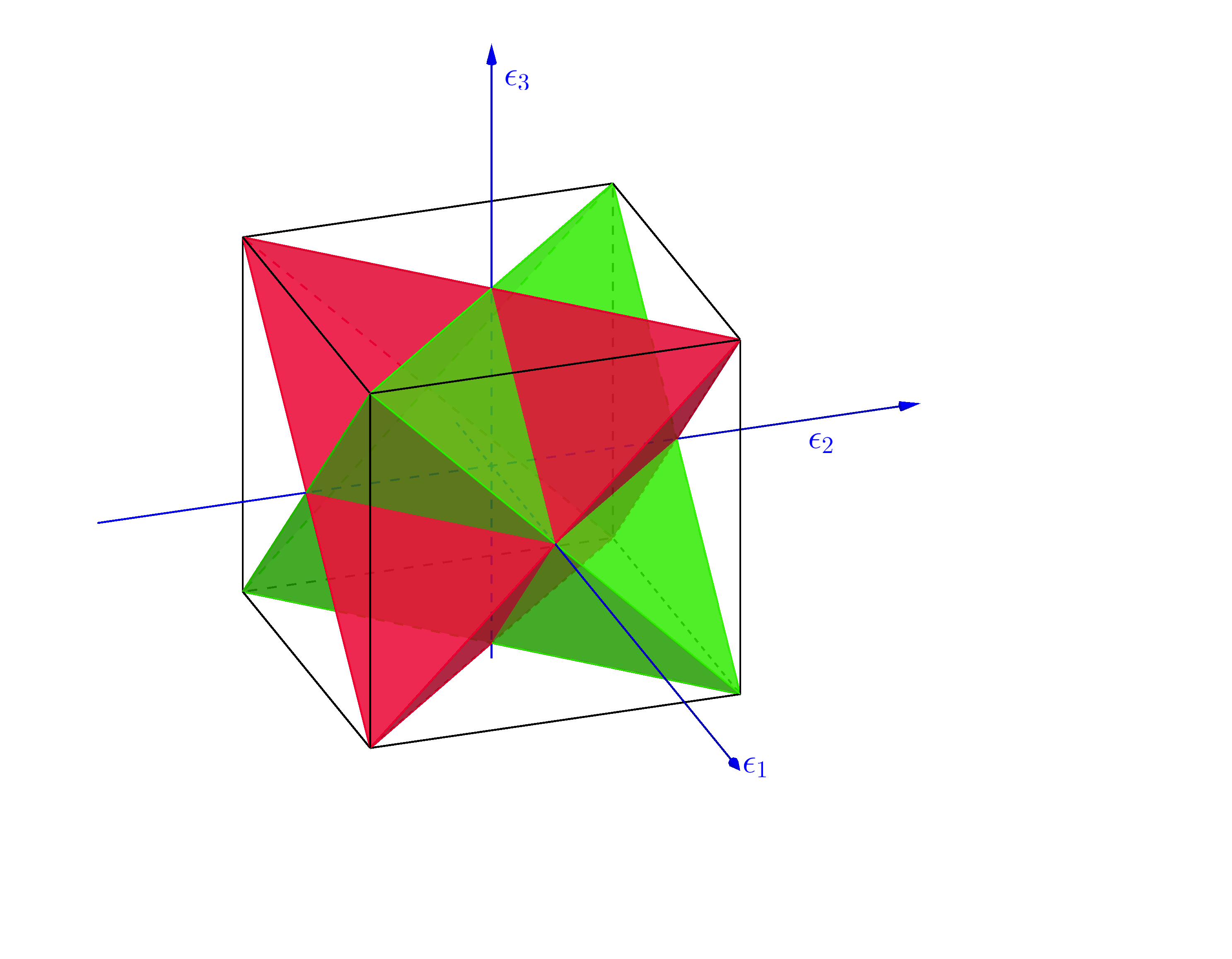}
	\caption{A stellated octahedron formed by the Lagrangian matroid polytopes of the partial duals of $\map$. The green faces (the green tetrahedron) correspond to the polytopes $\mathbf P(\dtroid{\map})$, $\mathbf P(\dtroid{\partial_{12}\map})$, $\mathbf P(\dtroid{\partial_{13}\map})$ and $\mathbf P(\dtroid{\partial_{23}\map})$, while the red faces (the red tetrahedron) correspond to the polytopes $\mathbf P(\dtroid{\map^*})$, $\mathbf P(\dtroid{\partial_{1}\map})$, $\mathbf P(\dtroid{\partial_{2}\map})$ and $\mathbf P(\dtroid{\partial_{3}\map})$.}
	\label{fig:stellar}
\end{figure}
\end{example}

\subsection{Lagrangian pairs}\label{section:pairs}

In this section we show that Lagrangian subspaces representing maps on surfaces and their partial duals are a very natural example of a pair of Lagrangian subspaces, which we define below.

Let $V$ be an orthogonal $2n$-space. Two Lagrangian subspaces of $V$ are said to be a \emph{pair of Lagrangian subspaces} if their intersection has dimension $n-1$. It is well known that if $W<V$ is a totally isotropic subspace of dimension $n-1$, then it is contained in exactly two Lagrangian subspaces.

The following theorem is established in \cite[pp.~1031]{Booth20051023}:
\begin{theorem}\label{theorem:pair}Let $\mathcal B_1$ be the collection of bases of a Lagrangian orthogonal matroid, and $\mathcal B_2 = (i~i^*)\mathcal B_1$, where $(i~i^*)$ is to be thought of as a permutation acting element-wise on the members of $\mathcal B_1$. Then $\mathcal B_1$ and $\mathcal B_2$ are a Lagrangian pair of orthogonal matroids.\end{theorem}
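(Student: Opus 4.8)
The plan is to realize the two bases collections $\mathcal B_1$ and $\mathcal B_2 = (i~i^*)\mathcal B_1$ as the bases of two representable Lagrangian orthogonal matroids whose representing subspaces $L_1, L_2 < V$ meet in an $(n-1)$-dimensional totally isotropic subspace. By Theorem \ref{theorem:maps} (and the representability discussion preceding it), $\mathcal B_1$ is represented by an $n\times 2n$ matrix $(X|Y)$ with $XY^t$ anti-symmetric; after relabelling we may take $i=1$. By the Lemma preceding Theorem \ref{theorem:representations}, $\mathcal B_2 = (1~1^*)\mathcal B_1$ is represented by the matrix $(P|Q)$ obtained from $(X|Y)$ by swapping columns $1$ and $1^*$; since $(1~1^*)$ is an orthogonal transformation of $V$ (it interchanges the paired basis vectors $e_1 \leftrightarrow e_{1^*}$), the subspace $L_2$ it represents is just the image of $L_1$ under this isometry, hence again Lagrangian, so $\mathcal B_2$ is indeed a Lagrangian orthogonal matroid.

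Next I would show the intersection $L_1 \cap L_2$ has dimension exactly $n-1$. Consider the coordinate hyperplane projections: let $W$ be the subspace of $L_1$ consisting of vectors whose $e_1$- and $e_{1^*}$-coordinates agree in the two representations — more concretely, row-reduce $(X|Y)$ so that a chosen base not involving index $1$ or $1^*$ becomes the identity block (this is possible precisely when such a base exists; if neither $1$ nor $1^*$ lies in the "free" part one argues via a base that does contain them, handled symmetrically). A vector $v = \sum a_j e_j + \sum b_j e_{j^*} \in L_1$ lies in $L_2$ if and only if its coordinate expression is fixed under swapping the first columns of the two representations, which forces a linear condition relating $a_1$ and $b_1$; this cuts $L_1$ down by at most one dimension, giving $\dim(L_1\cap L_2) \geq n-1$. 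Equality then follows because $L_1 = L_2$ would force $\mathcal B_1 = \mathcal B_2$, i.e. $(1~1^*)$ to fix every base, which fails unless the matroid is degenerate in index $1$ — and in that degenerate case one checks directly that $1$ (or $1^*$) lies in every base or no base, whence $\mathcal B_1$ and $\mathcal B_2$ still share an $(n-1)$-coloop/loop structure making the pair condition hold trivially.

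Finally I would verify $L_1 \cap L_2$ is totally isotropic: this is automatic since it is a subspace of the isotropic subspace $L_1$, and the bilinear form restricted to any subspace of an isotropic subspace vanishes. Combined with the well-known fact quoted just before the theorem — an $(n-1)$-dimensional totally isotropic subspace of an orthogonal $2n$-space is contained in exactly two Lagrangians — we conclude that $L_1$ and $L_2$, being two distinct Lagrangians containing $L_1 \cap L_2$, are precisely those two, so by definition they form a pair of Lagrangian subspaces, and therefore $\mathcal B_1$ and $\mathcal B_2$ form a Lagrangian pair of orthogonal matroids.

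The main obstacle I anticipate is handling the bookkeeping in the dimension count cleanly: one must argue uniformly whether or not index $1$ can be "freed" by row operations, and rule out the degenerate coincidence $L_1 = L_2$ without case-chasing. The cleanest route is probably to observe that $L_2 = g\cdot L_1$ for the reflection $g = (1~1^*) \in BC_n \subset O(V)$, that $g$ has order $2$, and that $L_1 \cap L_2 = L_1 \cap g L_1$ always contains the fixed subspace of $g$ acting on $L_1 + gL_1$; a short rank argument (the two representations differ in exactly one pair of columns) then pins the codimension at $1$, with the equality $\dim = n-1$ equivalent to $L_1 \neq gL_1$, which is exactly the non-degeneracy hypothesis implicit in "$\mathcal B_2 = (i~i^*)\mathcal B_1$" being a genuinely different matroid — and when it is not, the statement is vacuous or immediate.
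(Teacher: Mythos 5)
First, a point of comparison: the paper does not prove this statement at all --- it is quoted from Booth, Borovik and Gelfand \cite{Booth20051023} --- so your argument has to stand entirely on its own, and as written it has two genuine gaps. The first is that you silently restrict to \emph{representable} Lagrangian orthogonal matroids: Theorem \ref{theorem:maps} only manufactures a representation $(X|Y)$ for matroids arising from maps, whereas the $\mathcal B_1$ of the statement is an arbitrary Lagrangian orthogonal matroid, and the cited result is combinatorial (indeed the notion of a ``Lagrangian pair of orthogonal matroids'' is defined in \cite{Booth20051023} without reference to representations, something the present paper leaves implicit). A proof in the stated generality must work from the symmetric exchange axiom, or the claim must be explicitly weakened to the represented case, which is all the paper actually uses afterwards. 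Within that represented case your core idea is sound: $(i~i^*)$ acts on $V$ as a reflection $g$ fixing the hyperplane $H=(e_i-e_{i^*})^\perp$ pointwise, so $L_1\cap H\subseteq L_1\cap gL_1$ gives $\dim(L_1\cap gL_1)\geq n-1$, and the intersection is automatically totally isotropic.

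The second gap is your treatment of the possibility $\dim(L_1\cap L_2)=n$, i.e.\ $L_1=gL_1$. You assert this would force $\mathcal B_1=\mathcal B_2$ and that in the resulting ``degenerate'' situation the pair condition ``holds trivially''; neither half is right. If $L_1=L_2$ the intersection has dimension $n$, not $n-1$, so the conclusion would be false rather than vacuous, and the coloop/loop fallback does not help (if $i$ lies in every base of $\mathcal B_1$, then $i^*$ lies in every base of $\mathcal B_2$, so the two collections are disjoint, not equal). What actually closes the argument is that $L_1=gL_1$ is impossible: decomposing a $g$-invariant $L_1$ into $\pm1$ eigenspaces of $g$ forces either $e_i-e_{i^*}\in L_1$, contradicting isotropy since $\form{e_i-e_{i^*}}{e_i-e_{i^*}}=-2\neq0$ in characteristic $0$, or $L_1\subseteq H$, impossible because $H$ is a nondegenerate $(2n-1)$-dimensional orthogonal space of Witt index $n-1$ and so contains no $n$-dimensional totally isotropic subspace. (Combinatorially: all bases of an orthogonal Lagrangian matroid have the same parity of $|B\cap[n]^*|$, so $(i~i^*)\mathcal B_1$ is disjoint from $\mathcal B_1$ and the two matroids are never equal.) With that substitution your outline goes through for representable matroids; the general statement still requires the combinatorial argument of the cited source.
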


Recall from the proof of theorem \ref{theorem:maps} that a map $\map$ on a surface $S$ with its set $V\cup V^*$ of vertices and covertices gives rise to a Lagrangian subspace $\iota(H_1(S\setminus(V\cup V^*)))$ of $\q^E\oplus\q^{E^*}$, where $\iota\colon H_1(S\setminus(V\cup V^*))\to\q^E\oplus\q^{E^*}$ assigns to a cycle $c$ its incidence vector in $\q^E\oplus\q^{E^*}$.

The following is clear from theorem \ref{theorem:pair}.
\begin{theorem}Let $S_j$, $V_j$ and $V_j^*$ denote the underlying surface of $\partial_j \mathcal M$ and its sets of vertices and covertices, respectively. Let
\[\mathcal H_j=H_1(S_j\setminus\{V_j\cup V_j^*\}),\]
for $j=0,1,\dots,n$, where $\partial_0 \mathcal M=\mathcal M$, $S_0=S$, $V_0=V$ and $V_0^*=V^*$. The pair $(\iota(\mathcal H_0),\iota(\mathcal H_j))$ is a Lagrangian pair of subspaces of $\orth$ whenever $j\neq 0$.\end{theorem}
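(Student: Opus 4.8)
The plan is to deduce the statement from Theorem~\ref{theorem:pair}, using the representation results of Section~\ref{section: representation of partial duals} to pass from matroids to subspaces. Write $L_0=\iota(\mathcal H_0)$ and $L_j=\iota(\mathcal H_j)$; since $\map$ and all of its partial duals have the same edge set $E$, and their duals the same coedge set $E^*$, all of these subspaces sit inside the one orthogonal space $\orth$. First I would record, via Theorem~\ref{theorem:maps} applied to $\map=\partial_0\map$ and to $\partial_j\map$, that $L_0$ and $L_j$ are Lagrangian subspaces of $\orth$ representing $\dtroid\map$ and $\dtroid{\partial_j\map}$; and, via Lemma~\ref{lemma: partial} with $A=\{j\}$, that $\dtroid{\partial_j\map}=\dtroid\map\symmdiff\{j,j^*\}=(j~j^*)\dtroid\map$. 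Theorem~\ref{theorem:pair}, applied with $\mathcal B_1=\dtroid\map$ and $i=j$, then says that $\dtroid\map$ and $\dtroid{\partial_j\map}$ are a Lagrangian pair of orthogonal matroids; the remaining point is to check that this pairing is realised geometrically by the particular representations $L_0$ and $L_j$, i.e.\ that $\dim(L_0\cap L_j)=n-1$.

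To settle this directly I would introduce the orthogonal involution $\tau_j\colon\orth\to\orth$ interchanging $e_j\leftrightarrow e_{j^*}$ and fixing the remaining basis vectors. By (the proof of) Theorem~\ref{theorem:representations}, swapping the columns $j$ and $j^*$ of a representation of $\dtroid\map$ produces a representation of $\dtroid{\partial_j\map}$; on the level of row spaces this swap is exactly $\tau_j$, so $\tau_j(L_0)$ is a Lagrangian subspace with matroid $\dtroid{\partial_j\map}$. The key claim is that $L_j=\tau_j(L_0)$, which I would justify from the locality of the passage from $\map$ to $\partial_j\map$: it is supported in a neighbourhood of the edge $j$, so it fixes every coedge $e_k^*$ and every intersection number $(c,e_k),(c,e_k^*)$ with $k\neq j$, while the coedge $j^*$ of $\partial_j\map$ takes over the role of the edge $j$ of $\map$ and conversely --- hence the incidence-vector map for $\partial_j\map$ is $\tau_j\circ\iota$. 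Granting this, the rest is a two-line computation: for $v=v'+a\,e_j+b\,e_{j^*}\in L_0$ with $v'$ having no $e_j,e_{j^*}$ component one has $\tau_j(v)-v=(b-a)(e_j-e_{j^*})$, so $v\in L_0\cap\tau_j(L_0)$ iff $(b-a)(e_j-e_{j^*})\in L_0$; as $L_0\neq\tau_j(L_0)$, this forces $e_j-e_{j^*}\notin L_0$ and $L_0\not\subseteq\{w:w_j=w_{j^*}\}$, so $L_0\cap L_j=L_0\cap\{w:w_j=w_{j^*}\}$ is a hyperplane of $L_0$, of dimension $n-1$. Finally $L_0\neq\tau_j(L_0)$ because the matroids $\dtroid\map$ and $\dtroid{\partial_j\map}$ differ: an orthogonally representable Lagrangian matroid is never $(j~j^*)$-invariant, since if no base contains $j$ then $(j~j^*)\dtroid\map$ is disjoint from $\dtroid\map$, while if a base $B$ contains $j$ then, after bringing $B$ to the identity, the skew-symmetry of $XY^t$ forces the minor over $(j~j^*)B$ to vanish.

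The step I expect to be the main obstacle is precisely the identity $L_j=\tau_j(L_0)$ --- that the incidence-vector construction of Theorem~\ref{theorem:maps} is equivariant for partial duality on one side and the transposition $\tau_j$ on the other. Making this rigorous means unwinding the contract-and-attach-a-loop (respectively delete-and-expand) description of $\partial_j\map$ and verifying that a homology basis of $S_j\setminus\{V_j\cup V_j^*\}$, together with all of its intersection numbers against $E\cup E^*$, matches the data coming from $S\setminus\{V\cup V^*\}$ once $e_j$ and $e_j^*$ are interchanged. If instead one reads Theorem~\ref{theorem:pair} as already encoding this passage to representations, then the first paragraph constitutes the entire proof.
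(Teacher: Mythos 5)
Your proposal is correct in outline, but it does considerably more than the paper, whose entire proof is the single sentence ``The following is clear from theorem \ref{theorem:pair}'' --- i.e.\ the paper passes from the matroid-level statement of Booth et al.\ (that $\dtroid\map$ and $(j~j^*)\dtroid\map$ form a Lagrangian pair of matroids) directly to the statement about the subspaces $\iota(\mathcal H_0)$ and $\iota(\mathcal H_j)$, without comment. You correctly sense that this jump hides something: being a ``Lagrangian pair'' at the level of base collections does not by itself compute $\dim(L_0\cap L_j)$ for the two particular representing subspaces, so your second paragraph --- introducing the coordinate swap $\tau_j$, showing $L_0\cap\tau_j(L_0)=L_0\cap\{w: w_j=w_{j^*}\}$ has dimension $n-1$ once $L_0\neq\tau_j(L_0)$, and deducing $L_0\neq\tau_j(L_0)$ from the vanishing of the diagonal entry of the skew-symmetric $Y$ --- is a genuine supplement to the paper, and each of those steps checks out.

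The one step you flag as the ``main obstacle,'' namely $L_j=\tau_j(L_0)$ as an equality of subspaces, is indeed the real content, and it is not supplied by the paper either: Theorem \ref{theorem:representations} and its lemma only assert that swapping the columns $j,j^*$ of a representation of $\dtroid\map$ yields \emph{a} representation of $\dtroid{\partial_j\map}$ (same matroid), not that it yields the specific subspace $\iota(\mathcal H_j)$ coming from the homology of $S_j\setminus\{V_j\cup V_j^*\}$. Your locality heuristic (partial duality is supported near the edge $j$, interchanging the roles of $j$ and $j^*$ and fixing all other intersection indices) is the right way to close this, but as written it remains a sketch; to make it airtight you would unwind the contract-and-attach-a-loop description of $\partial_j\map$ and exhibit an identification of cycle spaces under which $\iota_{\partial_j\map}=\tau_j\circ\iota_{\map}$. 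Alternatively, one can bypass the equality $L_j=\tau_j(L_0)$ entirely: both $L_0$ and $L_j$ contain the common codimension-one isotropic subspace spanned by the incidence vectors of cycles avoiding a neighbourhood of $j\cup j^*$, and they are distinct because their matroids are (your skew-symmetry argument); since an $(n-1)$-dimensional isotropic subspace lies in exactly two Lagrangians, the pair property follows. Either way, your write-up is more honest than the paper's about where the work lies.
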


Note that this result does not hold for an arbitrary subset $A$ of $E$ or $E\cup E^*$ since the intersection of their respective Lagrangian subspaces may not be $n-1$ dimensional. In some cases $\map$ and $\partial_A\map$ can be isomorphic (see example \ref{example:dessin}) so the intersection of their respective Lagrangian subspaces is $n$ dimensional.

\section{Grothendieck's dessins d'enfants}\label{section: application}

A \emph{dessin d'enfant}, or just \emph{dessin} for short, is a pair $(X,f)$, where $X$ is an algebraic curve, or equivalently a compact Riemann surface, and $f\colon X\to\rsphere$ is a holomorphic ramified covering, ramified at most over a subset of $\zoi$. Moreover, by \belyi's theorem \cite{belyi80, belyi02, girondo_gonzalez-diez, lando_zvonkin, schneps94}, both $X$ and $f$ have a model over $\algbr$. Two dessins are isomorphic if they are isomorphic as ramified coverings.

It is well known that isomorphism classes of dessins are in a 1-1 correspondence with isomorphism classes of bipartite maps on surfaces, with vertices coloured in black and white. The black and white vertices correspond to the points in $\inv f(0)$ and $\inv f(1)$, respectively, the half-edges correspond to the preimages of the open unit interval, and the points in $\inv f(\infty)$ correspond to face-centres. If all white vertices of a dessin have degree 2, then the dessin is a map since two half-edges glue along a white vertex into an edge.

\begin{rmrk}Note that a segment on $X$ connecting a black with a white vertex is regarded not as an edge, but as a half-edge. When a white vertex is incident to precisely two half-edges, then we consider the two half-edges to be a single edge. Otherwise, if more than two half-edges are adjacent to a white vertex, then we no longer have an edge, but rather a \emph{hyper-edge}.
\end{rmrk}

In general, the equations defining a dessin $(X,f)$ can be written down in many ways. The field extension of $\q$ generated by the coefficients of $X$ and $f$ is called a \emph{field of definition for} $(X,f)$. \belyi's theorem guarantees that at least one field of definition is a subextension of $\algbr$.

There is a natural action of the absolute Galois group $\absgal$ over $\q$ on a dessin $(X,f)$: an automorphism $\theta\in\absgal$ acts on $(X,f)$ by acting on the coefficients of both $X$ and $f$. The image $(X,f)^\theta=(X^\theta,f^\theta)$ under the action is another dessin, and moreover, if $(X,f)$ is a map, then $(X,f)^\theta$ is a map as well.

\begin{rmrk}Note that by considering maps only we do not lose any Galois-theoretic information encoded by dessins: if $(X,f)$ is not a map, i.e.\ if it has a white vertex of degree not equal to 2, then the dessin $(X,\beta)$, where $\beta=4f(1-f)$ is defined over the same extension of $\q$ as $(X,f)$ and moreover, it is a map. Moreover, $\beta$ sends the points of $\inv f(0)$ and $\inv f(1)$ to 0, and $\inv\beta(1)=\inv f(1/2)$, hence the map corresponding to $(X,\beta)$ is obtained from $(X,f)$ by colouring all the white vertices black and adding new white vertices of degree 2 to each edge.
\end{rmrk}

An interesting class of partial duals of $\map$ are the partial duals
$\partial_{E\setminus B}\map$, where $B$ is a base of $\dtroid\map$. By theorem \ref{thm:correspondence} the partial dual $\map_B$ has precisely one face. Such maps are of great interest in the theory of dessins d'enfants as they provide examples of maps which can be defined over their \emph{field of moduli}, which is defined as follows.

Let $(X,f)$ be a dessin, and $\txn{Stab}(X,f)$ its stabiliser in $\absgal$, i.e.
\[\txn{Stab}(X,f)=\{\theta\in\absgal\mid(X,f)\cong(X,f)^\theta\}.\]
The subfield of $\algbr$ fixed by $\txn{Stab}(X,f)$, i.e.
\[\txn{Fix}(X,f)=\{a\in\algbr\mid\theta(a)=a,\txn{ for all }\theta\in\txn{Stab}(X,f)\}\]
is called \emph{the field of moduli of} $(X,f)$.

The field of moduli is \emph{the best} field of definition for a dessin in the sense that it is contained in every field in which a model for $(X,f)$ can be written down.

However, it is not always possible to write down a model for $(X,f)$ over its field of moduli, so it is of importance to determine the necessary and sufficient criteria which a dessin has to satisfy so that its field of moduli is also a field of definition. One such was given by Birch \cite{birch1994noncongruence}: a dessin $(X,f)$ can be defined over its field of moduli if there is a point in $\inv f(\infty)$ whose ramification index is unique among points in $\inv f(\infty)$ (see also \cite{SijslingVoight}). This means that a dessin can be defined over its field of moduli if it has a face of unique degree, or equivalently, if the permutation $\varphi$ has a cycle of unique length. Therefore, we have the following
\begin{theorem}Let $\map$ be a map with $\mathcal B$ as its collection of bases. Then for any $B\in\mathcal B$ the partial duals
\[\partial_{E\setminus B}\map\txn{ and }\partial_{B\cap E}\map\]
can both be defined over their fields of moduli which coincide.
\end{theorem}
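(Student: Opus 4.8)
The plan is to observe that the two partial duals in the statement are geometric duals of one another, and then to combine Birch's criterion with the Galois-equivariance of the duality operation. First I would record the reduction to a single partial dual: since $B\in\mathcal B$, as sets $E\setminus B=E\setminus(B\cap E)$, so writing $A=B\cap E\subseteq E$ the two maps in the statement are $\partial_A\map$ and $\partial_{E\setminus A}\map$, and by lemma \ref{lemma: partial dual properties}, parts (a) and (d),
\[(\partial_A\map)^*=\partial_E(\partial_A\map)=\partial_{E\symmdiff A}\map=\partial_{E\setminus A}\map\]
because $A\subseteq E$. Hence $\partial_{E\setminus B}\map=(\partial_{B\cap E}\map)^*$: the two maps are geometric duals of each other. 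On the level of dessins, taking the geometric dual of a map $\map\leftrightarrow(X,f)$ amounts to replacing $f$ by $1/f$: the Möbius transformation $z\mapsto 1/z$ fixes $1$ and interchanges $0$ and $\infty$, so it swaps the roles of the vertices ($f^{-1}(0)$) and the face-centres ($f^{-1}(\infty)$) while fixing the degree-$2$ white vertices ($f^{-1}(1)$); in particular the dual of a map is again a map, with \belyi map $1/f$. Crucially, $z\mapsto 1/z$ is defined over $\q$.

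Next I would apply Birch's criterion to $\partial_{E\setminus B}\map$: by theorem \ref{thm:correspondence} this partial dual has exactly one face, so its permutation $\varphi$ consists of a single cycle, which is (vacuously) of unique length among the cycles of $\varphi$. Hence $\partial_{E\setminus B}\map$ can be defined over its field of moduli; fix a model $(X,f)$ with $X$ and $f$ both defined over $K$, the field of moduli of $\partial_{E\setminus B}\map$.

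It remains to transport this to $\partial_{B\cap E}\map$ and to identify the two fields of moduli. Since $z\mapsto 1/z$ is $\q$-rational, $(X,1/f)$ is again defined over $K$, and by the first paragraph it is a model of $(\partial_{E\setminus B}\map)^*=\partial_{B\cap E}\map$; hence $\partial_{B\cap E}\map$ is defined over $K$. For the fields of moduli, for each $\theta\in\absgal$ we have $(X,1/f)^\theta=(X^\theta,1/f^\theta)$ (again since $1/z$ is $\q$-rational), and any isomorphism $(X,f)\to(X,f)^\theta$ is simultaneously an isomorphism $(X,1/f)\to(X,1/f)^\theta$; therefore $\txn{Stab}(\partial_{B\cap E}\map)=\txn{Stab}(\partial_{E\setminus B}\map)$, the two fields of moduli coincide, and both equal $K$, which gives the theorem.

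The step I expect to demand the most care is the bookkeeping identifying geometric duality of \emph{maps} with post-composing the \belyi map by $z\mapsto 1/z$ — in particular checking that this respects the black/white-vertex conventions used to pass between maps and dessins and that it is equivariant for the $\absgal$-action; once that is in place, everything else is a short formal argument. Alternatively one could avoid duality by noting that $\partial_{B\cap E}\map$ has a single vertex, hence a black vertex whose degree is unique among black vertices, and invoking the $0\leftrightarrow\infty$-symmetric form of Birch's criterion — but establishing equality of the two fields of moduli still comes down to the same Galois-equivariance.
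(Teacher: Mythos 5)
Your proposal is correct and follows essentially the same route as the paper: reduce to the one-faced partial dual $\partial_{E\setminus B}\map$ via Theorem \ref{thm:correspondence} and Birch's criterion, then transfer to $\partial_{B\cap E}\map$ using the fact that it is the geometric dual, realised by the $\q$-rational substitution $f\mapsto 1/f$. You merely supply more detail than the paper does — in particular the verification via Lemma \ref{lemma: partial dual properties} that the two partial duals are duals of one another, and the Galois-equivariance argument showing the stabilisers (hence fields of moduli) coincide — both of which the paper asserts without elaboration.
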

\begin{proof}We have established in theorem \ref{thm:correspondence} that the partial dual $\partial_{E\setminus B}\map$ has precisely one face, so it can be defined over its field of moduli. That $\partial_{B\cap E}\map$ can be defined over its field of moduli follows from the fact that if a map can be defined over its field of moduli, then so can its dual map and the two fields coincide: if $\map$ is given by the pair $(X,f)$, then its dual map will be given by the pair $(X,1/f)$. Since $\partial_{B\cap E}\map$ is the dual of $\partial_{E\setminus B}\map$, the theorem follows.\end{proof}

Unfortunately, since this theorem holds for all dessins that are maps, including those that cannot be defined over their field of moduli, it heavily suggests that matroids and partial duals will not be helpful in establishing criteria for $(X,f)$ to be realisable over its field of moduli. Nevertheless, some interesting phenomena may be observed, as shown in the following section.

\subsection{The action of $BC_n$ on dessins of degree $2n$}

We say that a dessin $(X,f)$ is of degree $n$ if the size of the fibre $\inv f(1/2)$ above $1/2$ is $n$. Equivalently, $(X,f)$ is of degree $n$ if its realisation as a bipartite map has precisely $n$ half-edges. If $(X,f)$ is a map, then it is of even degree $2n$.

In section \ref{section:action} we have introduced an action of $BC_n$ on maps:
\begin{itemize}
	\item a generator $(j~j^*)$ acts on $\map$ by partially dualising it with respect to the edge $j$,
	\item if $j,k\in[n]$, then $(j~k)(j^*~k^*)$ acts by relabelling,
	\item and if $j\in[n]$ but $k\in[n]^*$, then it acts by partially dualising with respect to $j$ and $k^*$ and relabelling.
\end{itemize}
Relabelling of a map has no effect on the dessin it induces, so the interesting action is carried by the transpositions $(j~j^*)$.

As opposed to $\absgal$, the action of $BC_n$ is topological in the sense that we can immediately tell from $\map$ how $(j~j^*)\map$ sits on a surface, whereas to see $\map^\theta$ for some $\theta\in\absgal$, we first have to find a model $(X,f)$ for it and then look at the preimage of $[0,1]$ under $\inv{(f^\theta)}$. On the other hand, the algebraic properties which both $\map$ and $(j~j^*)\map$ have in common are completely non-obvious, as their underlying algebraic curves can (and they often do) belong to distinct moduli spaces, as we shall see in the following example.

\begin{example}\label{example:dessin}Consider the Galois orbit shown in figure \ref{figure: appendix} consisting of three maps $\map^+$, $\map^-$ and $\map^\mathbb R$. These maps correspond to the dessins $(X_\nu,f_\nu)$ with
\begin{align*}
X_\nu\colon y^2 &=-\frac{17\nu^2 + 8 - 42\nu}{960400}
(19600x^2+55552\nu x -18432\nu^2x\\ &-88408x+338963-130592\nu+65792\nu^2)(x-1),\end{align*}
\begin{align*}
\frac{1}{f_\nu(x,y)} 	&=\frac{1}{735306250}(552\nu^2 - 617\nu + 68)(42875x^4 + 1756160\nu yx^2\\
									& - 860160\nu^2yx^2 - 4543840yx^2 + 3959200\nu x^3 - 10346175x^3\\
									& - 1926400\nu^2x^3 + 31782912\nu^2yx - 63438592\nu yx + 168996968yx\\
									& + 18916352\nu^2x^2- 37781632\nu x^2 + 100206428x^2 - 257512128y\\
									&- 48381952\nu^2y + 96684032\nu y - 62101504\nu^2x - 330259656x\\
									&+ 123960064\nu x + 48381952\nu^2 - 96684032\nu + 257512128),
\end{align*}
where $\nu$ is a root of $256\nu^3-544\nu^2+1427\nu-172$. The notation suggests that $\map^+$ and $\map^-$ correspond to the root with the positive and negative imaginary part, respectively, and that $\map^\mathbb{R}$ corresponds to the real root.

\begin{figure}[ht]
  \centering
  \includegraphics[scale=.4,trim={6cm 14cm 6cm 4.75cm}]{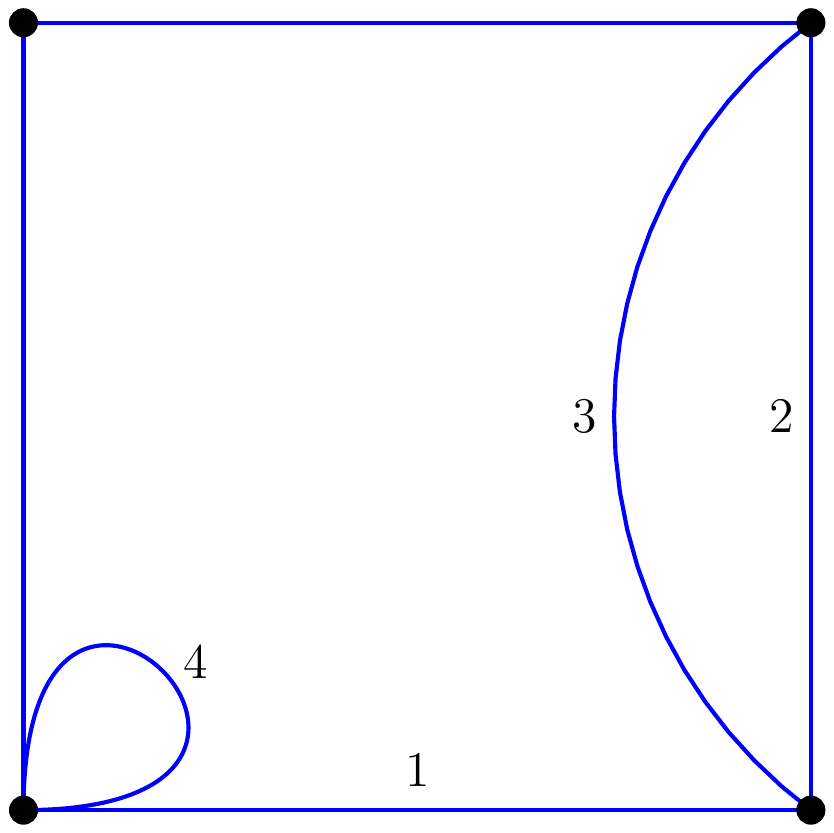}
  \includegraphics[scale=.4,trim={6cm 14cm 6cm 4.75cm}]{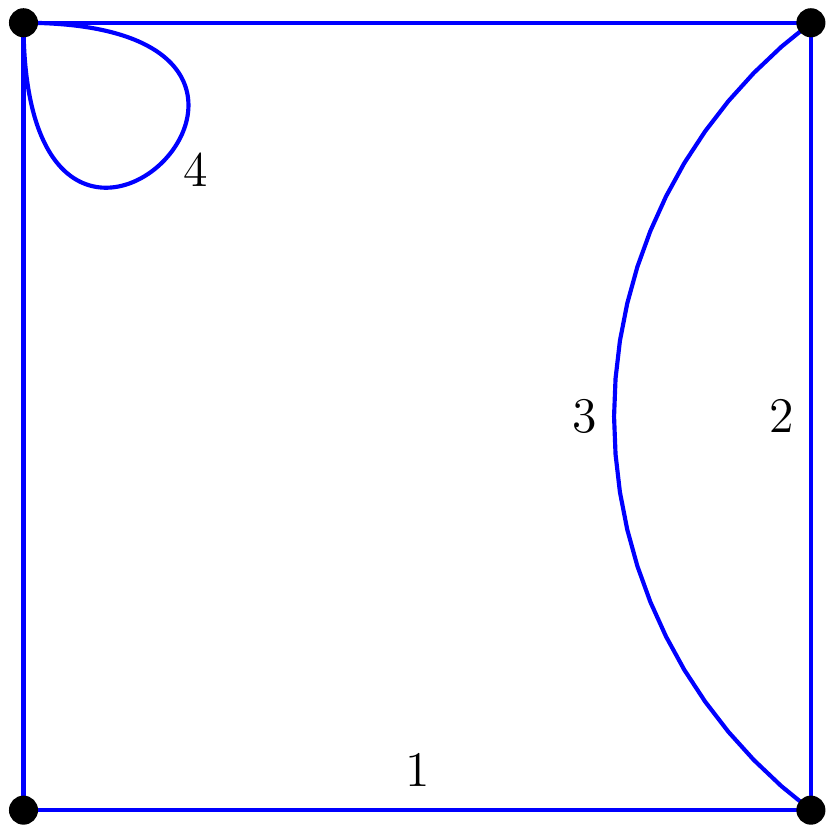}
  \includegraphics[scale=.4,trim={6cm 14cm 6cm 4.75cm}]{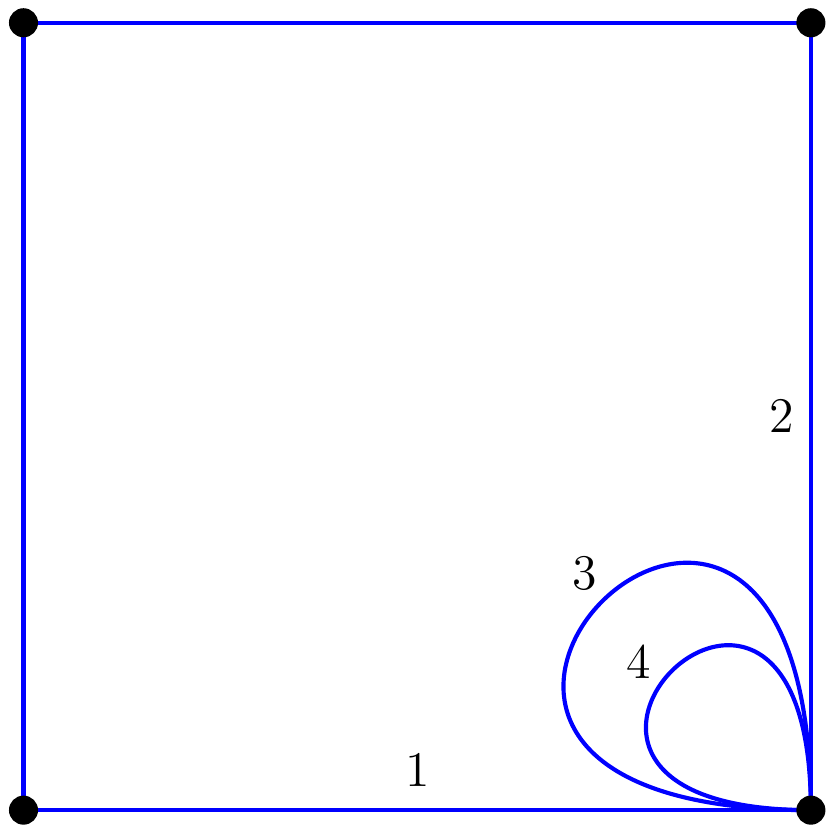}
  \caption{From left to right: the three genus 1 maps $\map^+$, $\map^-$ and $\map^\mathbb R$ with one vertex, four edges and two faces. These maps form a complete Galois orbit.}\label{figure: appendix}
\end{figure}

The maps $\map^+$ and $\map^-$ are each others partial duals with $\partial_{12}\map^-=\map^+$ so it suffices to look at only one of them. In what follows we shall illustrate the symmetry between the fields of definition and the cartographic groups of the partial duals of the maps $\map^-$ and $\map^\mathbb{R}$.

The fields of definition were computed in \cite{adrianov09}, while the cartography groups were computed by using Sage and GAP. The GAP function IdGroup was used to identify the isomorphism types of the cartographic groups in question. All of them appear in GAP's \emph{SmallGroups} library and unless they belong to a familiar family, we refer to them as $(m,n(m))$, where $m$ is the order of the group and $n(m)$ is its unique identifier among all groups of order $m$.

To illustrate the symmetry of fields of definition and cartography groups of partial duals, we shall use the following schematic:
\begin{table}[ht]
  \centering
  \scalebox{.95}{
   \begin{tabular}{ccccccccccc}
     &  &  &  &  & $\map$ &  &  &  &  & \\[5pt]
     & $\partial_1\map$ &  & $\partial_2\map$ &  &  &  & $\partial_3\map$ &  & $\partial_4\map$ &  \\[5pt]
     $\partial_{12}\map$ &  & $\partial_{13}\map$ &  & $\partial_{14}\map$ &  & $\partial_{23}\map$ &  & $\partial_{24}\map$ &  & $\partial_{34}\map$ \\[5pt]
     & $\partial_{123}\map$ &  & $\partial_{124}\map$ &  &  &  & $\partial_{134}\map$ &  & $\partial_{234}\map$ &  \\[5pt]
     &  &  & &  & $\partial_{1234}\map$ &  &  &  &  &
   \end{tabular}
  }
  \caption{The schematic for the partial duals of a map $\map$.}\label{table: maps}
\end{table}

The symmetry of the fields of definition and the cartography groups is accounted for by duality: dual maps have coinciding fields of definition and isomorphic cartography groups, and the partial dual $\partial_A\map$ is dual to $\partial_{\{1,2,3,4\}\setminus A}\map$.

We also note that the pairs $\{\partial_3\map^-,\partial_{123}\map^-\}$ and $\{\partial_4\map^-,\partial_{124}\map^-\}$ form two complete Galois orbits of order 2 dual to each other. An automorphism acting on these maps is given by $i\sqrt 7 \mapsto -i\sqrt 7$.

In contrast, no two partial duals of $\map^\mathbb{R}$ belong to the same Galois orbit.\vspace{\topsep}

\noindent\emph{\textbf{Overview of fields of definition.}} The field of definition of $\map^-$ and $\map^\mathbb{R}$ is $\q(\nu^{\pm\mathbb R})$, where $\nu^{\pm\mathbb R}$ denotes the set of roots of $256\nu^3-544\nu^2+1427\nu-172$.

\begin{table}[ht]
  \centering
  \scalebox{1}{
   \begin{tabular}{ccccccccccc}
     &  &  &  &  & $\q(\nu^{\pm\mathbb R})$ &  &  &  &  & \\[5pt]
     & $\q$ &  & $\q$ &  &  &  & $\q(i\sqrt 7)$ &  & $\q(i\sqrt 7)$ &  \\[5pt]
     $\q(\nu^{\pm\mathbb R})$ &  & $\q$ &  & $\q$ &  & $\q$ &  & $\q$ &  & $\q(\nu^{\pm\mathbb R})$ \\[5pt]
     & $\q(i\sqrt 7)$ &  & $\q(i\sqrt 7)$ &  &  &  & $\q$ &  & $\q$ &  \\[5pt]
     &  &  & &  & $\q(\nu^{\pm\mathbb R})$ &  &  &  &  &
   \end{tabular}
  }
  \caption{Fields of definition for the partial duals of $\map^-$, and hence of $\map^+$. All the fields of definition are also fields of moduli.}\label{table: fields}
\end{table}

We note that every partial dual of $\map^\mathbb{R}$ is defined over the real numbers.

\begin{table}[ht]
  \centering
  \scalebox{1}{
   \begin{tabular}{ccccccccccc}
     &  &  &  &  & $\q(\nu^{\pm\mathbb R})$ &  &  &  &  & \\[5pt]
     & $\q$ &  & $\q$ &  &  &  & $\q(\sqrt{105})$ &  & $\q(\sqrt{105})$ &  \\[5pt]
     $\q(\nu^{\pm\mathbb R})$ &  & $\q$ &  & $\q$ &  & $\q$ &  & $\q$ &  & $\q(\nu^{\pm\mathbb R})$ \\[5pt]
     & $\q(\sqrt{105})$ &  & $\q(\sqrt{105})$ &  &  &  & $\q$ &  & $\q$ &  \\[5pt]
     &  &  & &  & $\q(\nu^{\pm\mathbb R})$ &  &  &  &  &
   \end{tabular}
  }
  \caption{Fields of definition for the partial duals of $\map^\mathbb{R}$.}\label{table: fields_r}
\end{table}

\begin{rmrk}Although we have noted that there are no two partial duals of $\map^\mathbb{R}$ which belong to the same Galois orbit, the partial duals $\partial_3\map^\mathbb{R}$, $\partial_4\map^\mathbb{R}$ and $\partial_{123}\map^\mathbb{R}$, $\partial_{124}\map^\mathbb{R}$ belong to Galois orbits of order 2 since they are defined over a quadratic field. Their images under the automorphism $\pm\sqrt{105}\mapsto\mp\sqrt{105}$ can be found among the partial duals of the map in figure \ref{figure: appendix105}.
\begin{figure}[ht]
  \centering
  \includegraphics[scale=.4,trim={6cm 14cm 6cm 4.75cm}]{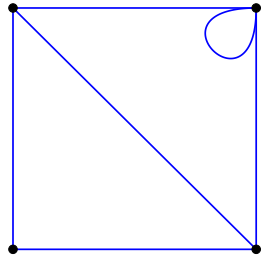}
  \caption{The map whose partial duals are conjugate to $\partial_3\map^\mathbb{R}$, $\partial_4\map^\mathbb{R}$, $\partial_{123}\map^\mathbb{R}$ and $\partial_{124}\map^\mathbb{R}$.}\label{figure: appendix105}
\end{figure}
\end{rmrk}

\vspace{\topsep}

\noindent\emph{\textbf{Overview of cartographic groups.}}

\begin{table}[ht]
  \centering
  \scalebox{.76}{
   \begin{tabular}{ccccccccccc}
     &  &  &  &  & $S_8$ &  &  &  &  & \\[5pt]
     & $A_8$ &  & $\text{PSL}(2,7)$ &  &  &  & $(1344,11686)$ &  & $(1344,11686)$ &  \\[5pt]
     $S_8$ &  & $S_8$ &  & $(1152,157849)$ &  & $(1152,157849)$ &  & $S_8$ &  & $S_8$ \\[5pt]
     & $(1344,11686)$ &  & $(1344,11686)$ &  &  &  & $\text{PSL}(2,7)$ &  & $A_8$ &  \\[5pt]
     &  &  & &  & $S_8$ &  &  &  &  &
   \end{tabular}
  }
  \caption{Cartographic groups for the partial duals of $\map^-$, and hence of $\map^+$.}\label{table: monodromy}
\end{table}

\begin{table}[ht]
  \centering
  \scalebox{1}{
   \begin{tabular}{ccccccccccc}
     &  &  &  &  & $S_8$ &  &  &  &  & \\[5pt]
     & $(288,1025)$ &  & $(288,1025)$ &  &  &  & $A_8$ &  & $A_8$ &  \\[5pt]
     $S_8$ &  & $S_8$ &  & $S_8$ &  & $S_8$ &  & $S_8$ &  & $S_8$ \\[5pt]
     & $A_8$ &  & $A_8$ &  &  &  & $(288,1025)$ &  & $(288,1025)$ &  \\[5pt]
     &  &  & &  & $S_8$ &  &  &  &  &
   \end{tabular}
  }
  \caption{Cartographic groups for the partial duals of $\map^\mathbb{R}$.}\label{table: monodromy_r}
\end{table}
\end{example}
\newpage

\section{Concluding remarks and further research}\label{section:concluding}

Throughout this paper we have seen that maps on surfaces encode various combinatorial, algebraic, topological and group and Galois-theoretic data. The action of $BC_n$ on maps is well-behaved with respect to all but the Galois-theoretic data, and at the moment it doesn't seem likely that this action will be illuminating in the theory of dessins d'enfants.

A natural step is to consider operations akin to partial duality on representations of symplectic matroids, i.e.\ the combinatorial data of linearly independent vectors of $k$-dimensional isotropic subspaces of a symplectic $2n$-space, with $k<n$. However, it is not known if general symplectic matroids satisfy a basis exchange axiom; the definition of a base of a symplectic matroid is given by a certain \emph{maximality property} with respect to a Gale order induced by a permutation in $BC_n$. An even more ambitious goal is to understand such operations in the general framework of Coxeter matroids \cite{borovik_gelfand_white} of which ordinary, symplectic and Lagrangian matroids are a special case.

Furthermore, (ordinary) matroid polytopes are closely related to cluster algebras of Grassmannians (see for example chapter 5 in \cite{postnikov07}), and it would definitely be worthwhile to investigate the role of Lagrangian (and Coxeter matroids in general) matroids in the theory of cluster algebras.

Recent work by Lando and Zhukov \cite{lando_zhukov} connecting Lagrangian matroids to knot theory via Vassiliev invariants seems as a fertile ground for further application of Lagrangian matroid theory to low-dimensional topology.

\bibliography{references}

\end{document}